\definecolor{darkblue}{rgb}{0,0,.75}
\definecolor{darkred}{rgb}{.7,0,0}
\definecolor{darkgreen}{rgb}{0,.7,0}
    \theoremstyle{plain}
    \newtheorem{theorem}{Theorem}[section]
    \newtheorem{lemma}{Lemma}[section]
    \newtheorem{proposition}{Proposition}[section]
    \newtheorem{corollary}{Corollary}[section]
    \theoremstyle{definition}
    \newtheorem{definition}{Definition}
    \newtheorem*{notation}{Notation}
    \newtheorem*{example}{Example}
\newcommand*{\indep}{%
  \mathbin{%
    \mathpalette{\@indep}{}%
  }%
}
\newcommand*{\@indep}[2]{%
  \sbox0{$#1\perp\m@th$}
  \sbox2{$#1=$}
  \sbox4{$#1\vcenter{}$}
  \rlap{\copy0}
  \dimen@=\dimexpr\ht2-\ht4-.2pt\relax
  \kern\dimen@
  {#2}%
  \kern\dimen@
  \copy0 
} 
\newcommand{\SymPol}[2]{\overset{o}{P}{}^{#1}_{#2}}
\title{Generalized Chacon polynomial constructions}
\author{Vladislav Slyusarev}
\date{\today}
\begin{document}
\maketitle

\section{Introduction}
The present work is devoted to Chacon’s automorphism, which was first described by Friedman in \cite{friedman}. This rank-one automorphism is well-known for a number of unusual ergodic and
spectral properties, hence it is used in the construction of examples in the theory of dynamical systems. The classical version of the automorphism if built in the 'cutting-and-stacking' procedure.

We build the transformation inductively. As a base, we have a Rokhlin tower of height $h_0 := 1$, which we will call Tower 0. At the $n$-th step, Tower $n-1$ (of height $h_{n-1}$) is divided into $3$ equal sub-columns and a spacer is inserted above the middle
column. Then we stack them together to get Tower $n$, of height $h_n=3h_{n-1}+1$. We further denote this transformation, acting on the Borel probability
space $(X,\mathcal{A},\mu)$, as $T$.

Chacon's automorphism $T$ is known as an example of a weakly mixing but not strongly mixing transformation (proved by Chacon in \cite{mixing} for a historical version of the automorphism, constructed with division into two subcolumns instead of three, though
but the arguments also apply in the above-described case). It is proved by Del Junco
in \cite{deljunco} that T has a trivial centralizer. This result was improved by Del Junco, Rahe and Swanson in \cite{joinings}: they have shown that the transformation has minimal self-joinings. Prikhod'ko and Ryzhikov proved in \cite{convolutions} that the convolution powers of its maximal spectral type are pairwise mutually singular. They considered the Koopman operator $\hat{T}$ and the weak closure of its powers. The connection between Chacon's automorphism and an infinite family of polynomials over the rationals was first shown in this paper, where the polynomials as functions of $\hat{T}$ are identified to prove the statement.

For a given $\alpha \in \left[0, 1\right]$, an automorphism $S$ is said to be \emph{$\alpha$-weakly mixing} (for some $0\le \alpha \le 1$) if there exists an integer sequence $(m_j)$ such that $\hat{S}^{m_j}$ converges weakly to $\alpha \Theta + (1-\alpha)\mathrm{Id}$, where $\Theta$ is the ortho-projector to constants. The disjointness of the convolution powers follows from $\alpha$-weakly mixing property when $0 < \alpha < 1$ (see \cite{katok} and \cite{stepin}). This property is widely used in ergodic theory to build counterexamples (\cite{deljunco1992}). The question of $\alpha$-weak mixing for Chacon’s automorphism can be considered as a special case of the general problem of description of the weak limits of powers of $\hat{T}$. A similar method with application  of the Koopman operator and the weak
limits of its powers is used in \cite{delarue2012}. For examples of transformations
with non-trivial explicit weak closure of powers, see \cite{ryzhikov}.

E. Janvresse, A. A. Prikhod’ko, T. de la Rue, and V. V. Ryzhikov have shown in \cite{weaklimits} that the weak closure of the powers of $\hat{T}$ is reduced to $\Theta$
and an explicit family of polynomials $P_m(\hat{T})$:

$$
\mathcal{L} = {\Theta} \cup \{P_{m_1}(\hat{T}),\ldots,P_{m_r}(\hat{T})\hat{T}^n,r\ge0,\ 1\le m_1\le\ldots\le m_r,\ n\in \mathbb{Z}\}.
$$
 
 These polynomials are obtained from the representation of $T$ as an integral automorphism over the $3$-adic odometer (this representation was also used earlier in \cite{convolutions}). Several properties and the recurrence formulae for these polynomials are also described in \cite{weaklimits}.

The purpose of the present paper is to generalize the results of \cite{weaklimits} connected to the family of polynomials. We consider a parametric set of transformations similar to the Chacon’s automorphism and infer the properties and the recurrence equations for the polynomials generated by these transformations. The constructions are parametrized by an integer $p \ge 3$, and the classic case of Chacon's automorphism corresponds to the parameter value $p=3$. In \Cref{definitions}, we give a definition of the functional $\phi$, the polynomials $P_m^p(t)$ and the generalized automorphism $T$. We provide
inductive formulas for these polynomials in \Cref{recurrence}. The proofs of several important properties of $P_m^p(t)$, such as the palindromic property and the sequence of degrees, are provided in \Cref{palindromic} and \Cref{degrees} respectively. 

\section{Basic definitions}\label{definitions}
\subsection{The polynomials $P_m^p$ in the p-adic group}
Let $p \ge 3$ be an arbitrary integer.
Consider the compact group of $p$-adic numbers
$$\Gamma := \left\{x = \left(x_0, x_1, x_2, \ldots \right), x_k \in \{0, 1, \ldots, p - 1\} \right\}$$
We will also use the set
$$\Gamma' := \Gamma \setminus \{(p-1,p-1,\ldots)\},$$
where each element has only a finite number of leading elements equal to $(p-1)$.

Let $\lambda$ be the Haar measure on $\Gamma$. Under $\lambda$, the coordinates $x_k$ are i.i.d., uniformly distributed in $\{0, 1, \ldots, p - 1\}$.

    We define two measure-preserving (in terms of $\lambda$) transformations on $\Gamma'$:
    \begin{itemize}
    \item The shift-map $\sigma:\ x=\left(x_0, x_1, x_2, \ldots \right) \mapsto \sigma x = \left(x_1, x_2, \ldots \right)$
    \item The adding map $S:\ x \mapsto x + 1$, where $1:=(1,0,0,\ldots) \in \Gamma'$. (In general, each integer $j$ is
identified with an element of $\Gamma'$, so that $S^jx = x + j$ for all $j \in \mathbb{Z}$ and all
$x \in \Gamma'$.)
    \end{itemize}
    Let $\phi: \Gamma' \rightarrow \mathbb{Z}$ be the <<first not $(p-1)$>> functional:\\
    \[\phi(x):=x_i\text{ if }x=(p-1, \ldots, p-1, x_i, x_{i+1}, \ldots)\]
    \[\phi^{(0)}(x):=0;\ \phi^{(m)}(x):=\phi(x)+\phi(Sx)+\ldots+\phi(S^{m-1}x)\]

Let us define $\pi_m$ as the probability distribution of $\phi^{(m)}$ on $\mathbb{Z}$:
    \[\pi_m(j) = \lambda(\phi^{(m)}=j)\]
    We consider the sequences of polynomials $P_m^p$ produced by $\pi_m$ for fixed $p$ and $m$:
    \[P_m^p(t):= \mathbb{E}_\lambda\left[ t^{\phi^{(m)}(x)}\right] = \sum\limits_{j=0}^m \pi_m(j) t^j \]
    
\subsection{Integral automorphisms over the p-adic odometer}
Fix an arbitary $p \ge 3$. Let $\{h_n\}_{n \ge 0}$ be the sequence of heights: $h_0 := 1,\ h_n = ph_{n-1}+1$.

For each $n \ge 0$, we define

$$X_n:=\{(x,i): x \in \Gamma',\ 0 \le i \le h_{n-1} + \phi(x)\}$$
We consider the transformation $T_n$ of $X_n$, defined by
$$T_n(x, i) := \begin{cases}
(x,i+1), & \text{ if } i+1 \le h_n - 1 + \phi(x) \\
(Sx,0), & \text{ if } i=h_n-1+\phi(x). \end{cases}$$

The bijective map $\psi_n : X_n \mapsto X_n+1$, defined by
$$\psi_n(x,i):=(\sigma x, x_0 h_n + i + \mathbbm{1}\{x_0=p-1\}).$$
Observe that $\psi_n$, it conjugates the transformations $T_n$ and
$T_{n+1}$. We introduce the probability measure $\mu_n$ on $X_n$: for a fixed $i$ and a subset
$A \subset \{(x, i), x\in\Gamma' \}$,
$$\mu_n(A):=\frac{1}{h_n + 1/2} \lambda (\{x \in \Gamma', (x, i) \in A\}).$$

The transformation $T_n$ preserves the measure $\mu_n$, and the map $\psi_n$ sets the correspondence between $\mu_n$ and $\mu_{n+1}$. Thus, all the measure-preserving dynamical systems $(X_n, T_n, \mu_n)$ are isomorphic.

For $ 0 \le i \le h_{n-1}$, we set
$E_{n,i} := \{(x, i) : x \in \Gamma'\} \subset X_n$. We have $E_{n,i} = T^i_n E_{n,0}$, hence
 $$\{E_{n,0},\ldots,E_{n,h_n-1}\}$$
is a Rokhlin tower of height $h_n$ for $T_n$. Yet, for any $n \ge 0$, and any $0 \le i \le
h_n-1$,
$$\psi_n(En,i) = E_{n+1,i} \sqcup E_{n+1,h_n+i} \sqcup E_{n+1,2h_n+i+1}.$$
Fix $n_0$. A composition of the isomorphisms $(\psi_n)$, lets us observe all these Rokhlin
towers inside $X_{n_0}$. It follows from the formula above that the towers are embedded in the
same way as the towers of Chacon’s automorphism desribed in the introduction. In case $p=3$, it literally defines the Chacon's automorphism: $(X_{n_0} , \mu_{n_0} , T_{n0})$ is isomorphic to $(X,\mu,T)$. For an arbitrary $p > 3$, it generates a similar dynamical system which we call the \emph{generalized Chacon automorphism}. In this paper we discuss the properties of the generalized automorphism and infer how the known properties of the classical Chacon's automorphism change in the general case.

\section{Palindromic property}\label{palindromic}
In this section, we are going to prove that, just like in the classical case, the polynomials $P_m^p$ are palindromic for any parameter $p$. Moreover, we generalize the definition of $\phi$ functional and describe the class of Chacon-like functionals which produce palindromic polynomials.

Let $p \ge 3$ be an arbitrary integer.

We can generalize the definition of $\phi$ functional:
\begin{definition}
    $$
    \phi(x) = \begin{cases}
                    \omega(x_0), & 0 \le x_0 \le p - 2 \\
                    \phi(\sigma x), & x_0 = p - 1
                \end{cases}
    $$
\end{definition}
    The classic case is hence described by $\omega(j)=j$. This is the only non-trivial $\omega$ for $p=3$. \\
    
\begin{definition}
Let $\phi_\star = \min\limits_x \phi(x), \phi^\star = \max\limits_x \phi(x), \delta=\phi^\star - \phi_\star$. We say that a functional $\phi$  \textit{has the palindromic property} iff 
$$
\forall j,\ 0 \le j \le m\delta:   \pi_m(m\phi_\star + j)=\pi_m(m\phi^\star-j)
$$
\end{definition}
    
    Let us find the sufficient conditions for $\phi$ to have the palindromic property.
    
\begin{definition}
We call a function $\omega$  \textit{antipalindromic} iff
\[\begin{cases}
	\mathrm{Ran }\ \omega = \{0,1, \ldots, \zeta\} \\
	\forall j,\ 0 \le j \le p-2: \  \omega(j) = \zeta - \omega(p-2-j)
\end{cases}\]
\end{definition}

\begin{notation}
We further use $[M, N]$ instead of $\{M,M+1, \ldots, N\}$.
\end{notation}

Note that given $\omega$ such that $\mathrm{Ran}\ \omega = [0,\zeta]$, we have $\mathrm{Ran}\ \phi = [0,\zeta]$ and $\mathrm{Ran}\ \phi^{(m)} = [0,m\zeta]$

\begin{example}
The usual $\omega(j)=j$ is antipalindromic for $p = 3$. Indeed,
$$
\omega(0) = 0 = 1 - \omega(1) = 1 - \omega(3 - 2 - 0)
$$
$$
\omega(1) = 1 = 1 - \omega(0) = 1 - \omega(3 - 2 - 1)
$$
In the same way it is shown that $\omega(j)=j$ is antipalindromic for any $p \ge 3$.
\end{example}

\begin{theorem}
If $\omega$ is antipalindromic, the corresponding $\phi$ has the palindromic property.
\end{theorem}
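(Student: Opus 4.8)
The plan is to reduce the palindromic property to the reciprocal‑polynomial identity $P_m^p(t)=t^{m\zeta}P_m^p(1/t)$, equivalently $\pi_m(j)=\pi_m(m\zeta-j)$ for $0\le j\le m\zeta$, which is exactly the asserted symmetry once we note that an antipalindromic $\omega$ has $\mathrm{Ran}\,\phi=[0,\zeta]$, hence $\phi_\star=0$, $\phi^\star=\zeta$, $\delta=\zeta$. I would then prove this identity by strong induction on $m$ via a self‑reducing formula for $\phi^{(m)}$ coming from the odometer structure. The first step is to peel off the zeroth coordinate: conditioning on $x_0=c$, the digit $x_0$ is uniform on $\{0,\dots,p-1\}$ and independent of $\sigma x$, which is again Haar‑distributed. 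Writing $x=x_0+p\,\sigma x$ gives $\sigma(x+k)=\sigma x+\lfloor (x_0+k)/p\rfloor$, while digit zero of $x+k$ is $(x_0+k)\bmod p$. Splitting the window $\{x,\dots,x+m-1\}$ by this digit, the elements with digit zero $\ne p-1$ contribute $\sum_{v\ne p-1}\omega(v)$ with suitable multiplicities, whereas for the elements with digit zero $=p-1$ the shifts $\sigma(x+k)$ run through a clean consecutive block $\{\sigma x,\dots,\sigma x+m^*-1\}$. This yields the key decomposition $\phi^{(m)}(x)=A(x_0,m)+\phi^{(m^*(x_0,m))}(\sigma x)$, where $m^*(c,m)=n_{p-1}(c,m)$, $A(c,m)=\sum_{v=0}^{p-2}n_v(c,m)\,\omega(v)$, and $n_v(c,m)=\#\{0\le k\le m-1:(c+k)\bmod p=v\}$. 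Taking $t^{(\cdot)}$ and using independence of $x_0$ and $\sigma x$ turns this into the recursion $P_m^p(t)=\tfrac1p\sum_{c=0}^{p-1}t^{A(c,m)}P^p_{m^*(c,m)}(t)$.

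Granting the recursion, the reciprocal identity for $P_m^p$ follows from the inductive hypothesis $P^p_{m'}(t)=t^{m'\zeta}P^p_{m'}(1/t)$ for $m'<m$, \emph{provided} the multiset $\{(A(c,m),m^*(c,m)):c\in\mathbb Z/p\}$ is invariant under the map $(A,m^*)\mapsto(\zeta(m-m^*)-A,\,m^*)$. Indeed, substituting $P^p_{m^*}(1/t)=t^{-m^*\zeta}P^p_{m^*}(t)$ into $t^{m\zeta}P_m^p(1/t)$ produces $\tfrac1p\sum_c t^{\zeta(m-m^*(c,m))-A(c,m)}P^p_{m^*(c,m)}(t)$, and reindexing the sum by the claimed involution returns $P_m^p(t)$. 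So the whole theorem reduces to producing an involution $c\mapsto c^\dagger$ of $\mathbb Z/p$ with $m^*(c^\dagger,m)=m^*(c,m)$ and $A(c^\dagger,m)+A(c,m)=\zeta\,(m-m^*(c,m))$.

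Constructing this involution is the crux and, I expect, the main obstacle. Writing $m=qp+s$ with $0\le s<p$, one has $n_v(c,m)=q+\mathbbm 1[v\in I_c]$, where $I_c=\{c,c+1,\dots,c+s-1\}\bmod p$ is the length‑$s$ arc in $\mathbb Z/p$ with left endpoint $c$. The antipalindromy $\omega(v)+\omega(p-2-v)=\zeta$ suggests matching $v$ with $p-2-v$, and the decisive observation is that $r(v):=(p-2-v)\bmod p$ is an affine reflection of $\mathbb Z/p$ that fixes $p-1$, hence carries every length‑$s$ arc to a length‑$s$ arc. Defining $c^\dagger$ by $I_{c^\dagger}=r(I_c)$ thus gives a well‑defined involution (for $1\le s\le p-1$, where $c\mapsto I_c$ is a bijection), and then $n_v(c^\dagger,m)=n_{r(v)}(c,m)$. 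Because $r$ fixes $p-1$ this forces $m^*(c^\dagger,m)=m^*(c,m)$, and because $r(v)=p-2-v$ on $\{0,\dots,p-2\}$ antipalindromy gives $A(c^\dagger,m)=\sum_v n_{p-2-v}(c,m)\omega(v)=\zeta\sum_v n_v(c,m)-A(c,m)=\zeta(m-m^*(c,m))-A(c,m)$, exactly as required. The degenerate case $s=0$ (i.e.\ $p\mid m$) makes $A,m^*$ independent of $c$ and follows from $\sum_{v=0}^{p-2}\omega(v)=(p-1)\zeta/2$, which is itself immediate from antipalindromy.

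Finally I would confirm the induction is well‑founded: for $m\ge 2$ every $m^*(c,m)\le\lceil m/p\rceil<m$, so the recursion strictly decreases $m$, while the base cases $m=0$ (with $P_0^p=1$) and $m=1$ (with $P_1^p(t)=\tfrac1{p-1}\sum_{v=0}^{p-2}t^{\omega(v)}$, reciprocal by a single use of antipalindromy) are checked directly; the only place where the $c=p-1$ term would reproduce $P_m^p$ on the right is $m=1$, which is why that case is handled separately. Assembling these steps establishes $P_m^p(t)=t^{m\zeta}P_m^p(1/t)$ for all $m$, i.e.\ the palindromic property.
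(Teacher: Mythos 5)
Your proof is correct, but it takes a genuinely different route from the paper. The paper proves a distributional symmetry of the whole process: its key lemma states that for antipalindromic $\omega$ the sequences $\{\phi(S^j x)\}_{j\ge0}$ and $\{\zeta-\phi(S^{-j}x)\}_{j\ge0}$ have the same law (a ``reverse and flip'' invariance read off from the block structure of the $\phi$-trajectory), and the theorem then follows in one step by rewriting $\pi_m(m\zeta-j)$ as a sum over trajectories and applying this invariance --- no induction on $m$ and no generating functions. You instead peel off the zeroth digit to get the recursion $P_m^p(t)=\tfrac1p\sum_c t^{A(c,m)}P^p_{m^*(c,m)}(t)$ and close a strong induction via the involution $c\mapsto c^\dagger$ induced by the reflection $v\mapsto(p-2-v)\bmod p$ fixing $p-1$; all the pieces check out (the arc description of $n_v(c,m)$, the identities $m^*(c^\dagger,m)=m^*(c,m)$ and $A(c^\dagger,m)+A(c,m)=\zeta(m-m^*(c,m))$, the separate treatment of $s=0$ via $\sum_v\omega(v)=(p-1)\zeta/2$, and the base cases $m=0,1$ needed because $m^*(p-1,1)=1$). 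Your recursion is essentially the general-$\omega$ version of the paper's Lemma~\ref{phiLemma} and Lemma~\ref{complexCase} from the later section on recurrence formulae, so your argument buys the recurrence relations as a byproduct and makes the induction fully explicit and self-contained, at the cost of more bookkeeping; the paper's argument is shorter and isolates the conceptual reason for palindromicity (a time-reversal symmetry of the stationary sequence $\phi(S^jx)$) but leans on a lemma whose proof is only sketched.
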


We will need the following lemma to prove the theorem.

\begin{lemma}  \label{distLemma}
If $\omega$ is antipalindromic, the probability distributions of the random sequences $\{\phi(S^j x)\}_{j \ge 0}$ and $\{ \zeta - \phi(S^{-j} x)\}_{j \ge 0}$ are the same.
\end{lemma}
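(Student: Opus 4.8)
The plan is to prove the distributional identity by exploiting the self-similar (renewal) structure of the stationary process $v_j := \phi(S^j x)$, rather than by a single pointwise conjugacy. A first observation is that a pointwise map is unlikely to work: the digitwise reflection that realizes $\phi \mapsto \zeta - \phi$ does not conjugate $S$ to $S^{-1}$, while the affine maps $x \mapsto a - x$ that do conjugate $S$ to $S^{-1}$ do not send $\phi$ to $\zeta - \phi$ pointwise. So the statement is genuinely distributional. Since $S$ preserves $\lambda$, the two-sided family $(\phi(S^j x))_{j \in \mathbb{Z}}$ is a stationary process, so comparing the two one-sided sequences amounts to comparing the laws of consecutive windows $(\phi(S^j x))_{0 \le j < L}$ and $(\zeta - \phi(S^{-j}x))_{0 \le j < L}$ for every $L$. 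For $L = 1$ the claim reduces to $\phi(x) \stackrel{d}{=} \zeta - \phi(x)$: conditioned on the position of the first non-$(p-1)$ digit, that digit is uniform on $[0, p-2]$, so $\phi(x) = \omega(U)$ with $U$ uniform on $[0, p-2]$, and antipalindromicity gives $\zeta - \omega(U) = \omega(p-2-U) \stackrel{d}{=} \omega(U)$.

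The core step is a substitution description of $(v_j)$. I would partition the index set into blocks on which the lowest digit $(x+j)_0$ runs through $0, 1, \ldots, p-1$; on such a block the higher part $\sigma(x+j)$ is a constant $y_k$, incremented by the adding map to $y_{k+1} = S y_k$ at the block boundary. Hence within each block the values of $v$ are exactly $\omega(0), \omega(1), \ldots, \omega(p-2), \phi(y_k)$, so $(v_j)$ is the concatenation of blocks $[\omega(0), \ldots, \omega(p-2), w_k]$ driven by the tower sequence $w_k = \phi(S^k y)$ together with a phase equal to $x_0$. Two features are crucial: the phase $x_0$ is independent of the tower sequence (a function of the tail $\sigma x$), and the tower sequence is itself a copy of the very process under study, with the same antipalindromic $\omega$.

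Next I would compute the same description for the reflected-reversed sequence $\zeta - \phi(S^{-j}x)$. Reading a block backwards turns $\omega(0), \ldots, \omega(p-2), \phi(y_k)$ into $\phi(y_k), \omega(p-2), \ldots, \omega(0)$; applying $\zeta - \cdot$ and invoking the antipalindrome relation $\zeta - \omega(\ell) = \omega(p-2-\ell)$ turns the deterministic part back into $\omega(0), \ldots, \omega(p-2)$, leaving the tower entry $\zeta - \phi(S^{-k}y)$. After re-grouping the blocks by one position, the reflected-reversed sequence has exactly the same substitution form — same deterministic pattern, same uniform phase law, same independence — but now driven by the tower sequence $\zeta - \phi(S^{-k}y)$, which is a copy of the reflected-reversed process. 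This is precisely the point where antipalindromicity is indispensable.

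Finally I would close by induction on the window length $L$. Any window of length $L$ meets at most $\lceil L/p\rceil$ tower positions, so its law is a fixed function of the phase and of a window of length $L' \le \lceil L/p\rceil$ of the tower sequence; by the previous paragraph this function and the phase law are identical in the forward and reflected-reversed cases. Since $\lceil L/p\rceil < L$ for $L \ge 2$ and $p \ge 3$, the inductive hypothesis applied to the tower sequences (forward carrying the law of the forward process, reflected-reversed carrying the law of the reflected-reversed process) forces the two length-$L$ window laws to coincide, the base case $L=1$ having been settled above. The main obstacle I anticipate is the careful justification of the substitution representation — in particular the independence of the phase from the tower sequence and the exact bookkeeping of block boundaries and of the window-to-tower-window reduction — because the self-referential nature of the tower sequence is exactly what forces an induction on $L$ rather than a direct symmetry.
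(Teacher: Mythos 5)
Your proposal is correct and follows essentially the same route as the paper: the same decomposition of $\{\phi(S^jx)\}_{j\ge 0}$ into blocks $\omega(0),\ldots,\omega(p-2)$ punctuated by the ``tower'' values $\{\phi(S^k\sigma x)\}$, with antipalindromicity used to show that reversing and flipping restores the block pattern while passing the reflection down to the tower sequence. Your explicit induction on the window length $L$ is a welcome addition: the paper closes with the bare assertion that the reverse-and-flip procedure ``works as identity transformation,'' which is only true in distribution and really requires exactly the self-similarity/recursion argument you spell out.
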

\begin{proof}
Let $x \in \Gamma \setminus \{(p-1,p-1,\ldots)\}$. We
say that $\mathrm{order}(x) = k \ge 0$ if $x_0=x_1=\ldots=x_{k-1}=p-1$ and $x_k \ne p-1$.
Since the first digit in the sequence $(\ldots, x-1,x,x+1)$ follows a periodic pattern
$(\ldots, 0, 1, 2, \ldots,p-2, p-1, 0, 1, 2, \ldots,p-2, p-1, \ldots)$, the contribution of points of order $0$ in the sequence
$\{\phi(S^j x)\}_{j \ge 0}$ provides a sequence of blocks $\omega(0),\omega(1),\ldots, \omega(p-2)$ separated by one symbol
given by a point of higher order. To fill in the missing symbols corresponding to positions $j$ 
such that $\mathrm{order}(x+j) \ge 1$, we observe that, if $x$ starts with a $p-1$, then for all $j \ge 0$, $\phi(x+pj)=\phi(\sigma x + j)$. Hence the missing symbols are given by the symbols $\{\phi(S^j \sigma x)\}_{j \ge 0}$.\\
Let us observe an example for $p=7$ produced by the Legendre symbol-like $\omega(x)=((\frac{x+1}{7}) + 1)/2$ (which is antipalindromic due to the Quadratic reciprocity):\bigskip\\
$\begin{array}{cccccccccccccccccccccccccccl}
1 & 1 & 0 & 1 & 0 & 0 & . & 1 & 1 & 0 & 1 & 0 & 0 &. & 1 & 1 & 0 & 1 & 0 & 0 & . & 1 & 1 & 0 & 1 & 0 & 0 &\leftarrow \text{contribution of order }0 \\
  &   &   &   &   &   & 1 &   &   &   &   &   &   &1 &   &   &   &   &   &   & 0 &   &   &   &   &   &   &\leftarrow \text{contribution of order }1 \\
1 & 1 & 0 & 1 & 0 & 0 & 1 & 1 & 1 & 0 & 1 & 0 & 0 & 1 & 1 & 1 & 0 & 1 & 0 & 0 & 0 & 1 & 1 & 0 & 1 & 0 & 0 &\leftarrow \text{the whole sequence} \\
\end{array}$
\bigskip\\
We may build $\{ \zeta - \phi(S^{-j} x)\}_{j \ge 0}$ from $\{\phi(S^j x)\}_{j \ge 0}$ with a composition of two measure-preserving transformations:
\begin{enumerate}
\item The 'reverse' transformation  $\{a_j\}_{j \ge 0} \mapsto \{a_{-j}\}_{j \ge 0}$ 
\item The 'flip' transformation which substitutes each element $k \in \{b_j\}_{j \ge 0}$ with $\zeta-k$, turning the sequence into $\{ \zeta - b_j\}_{j \ge 0}$ .
\end{enumerate}
By the definition of antipalindromic functions, it is clear that this procedure works as identity transformation when applied to $\{\phi(S^j x)\}_{j \ge 0}$.
\end{proof}

\begin{theorem}\label{pal_th}
If $\omega$ is antipalindromic, the corresponding $\phi$ has the palindromic property.
\end{theorem}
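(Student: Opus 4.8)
The plan is to derive the palindromic property directly from \Cref{distLemma} by passing from the sequence-level distributional identity to the partial sum of its first $m$ terms. First I would record the reduction afforded by the hypothesis: since $\omega$ is antipalindromic we have $\mathrm{Ran}\ \omega = [0,\zeta]$, hence $\mathrm{Ran}\ \phi = [0,\zeta]$ and $\mathrm{Ran}\ \phi^{(m)} = [0,m\zeta]$, so that $\phi_\star = 0$, $\phi^\star = \zeta$ and $\delta = \zeta$. The statement to be proved thus becomes the symmetry $\pi_m(j) = \pi_m(m\zeta - j)$ for all $0 \le j \le m\zeta$, which is exactly the assertion that $\phi^{(m)}$ and $m\zeta - \phi^{(m)}$ have the same law under $\lambda$.

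Next I would apply the equality of laws of the full sequences $\{\phi(S^j x)\}_{j\ge 0}$ and $\{\zeta - \phi(S^{-j}x)\}_{j\ge 0}$ granted by \Cref{distLemma} to the fixed measurable functional ``sum of the first $m$ terms''. Since two sequences with the same distribution are carried to random variables with the same distribution by any fixed measurable function of the sequence, this gives
$$\phi^{(m)}(x) = \sum_{j=0}^{m-1}\phi(S^j x) \overset{d}{=} \sum_{j=0}^{m-1}\bigl(\zeta - \phi(S^{-j}x)\bigr) = m\zeta - \sum_{j=0}^{m-1}\phi(S^{-j}x).$$

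The one genuine computation is to identify the law of the backward sum $B_m(x) := \sum_{j=0}^{m-1}\phi(S^{-j}x)$ with that of $\phi^{(m)}$. For this I would apply the $\lambda$-measure-preserving change of variable $z = S^{-(m-1)}x$: then $S^{-j}x = S^{m-1-j}z$, and letting $k = m-1-j$ run over $0,\ldots,m-1$ turns $B_m(x)$ into the forward Birkhoff-type sum $\sum_{k=0}^{m-1}\phi(S^k z) = \phi^{(m)}(z)$. Because $S^{-(m-1)}$ preserves $\lambda$, we get $B_m = \phi^{(m)}\circ S^{-(m-1)} \overset{d}{=} \phi^{(m)}$, and substituting this into the display above yields $\phi^{(m)} \overset{d}{=} m\zeta - \phi^{(m)}$, i.e. $\pi_m(j)=\pi_m(m\zeta-j)$, which with $\phi_\star = 0$, $\phi^\star = \zeta$ is precisely the palindromic property.

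The main obstacle I anticipate is bookkeeping rather than conceptual: making the passage from the sequence-level identity of \Cref{distLemma} to the scalar partial sums precise, and in particular tracking the index reversal in the change of variables so that the backward sum is correctly re-expressed as $\phi^{(m)}(z)$. I would also pause to confirm that the excluded orbit of the point $(p-1,p-1,\ldots)$ is $\lambda$-null, so that working on $\Gamma'$ rather than $\Gamma$ and invoking invertibility of the odometer $S$ does not affect any of the distributional identities used above.
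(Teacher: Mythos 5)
Your proposal is correct and follows essentially the same route as the paper: both reduce the palindromic property to the symmetry $\pi_m(j)=\pi_m(m\zeta-j)$ and derive it from \Cref{distLemma} by passing to the sum of the first $m$ terms of the two equidistributed sequences. Your explicit change of variables $z=S^{-(m-1)}x$ to identify the law of the backward sum with that of $\phi^{(m)}$ is a welcome clarification of a step the paper performs only implicitly (by silently reversing the indices inside its sum over joint distributions), but it is the same argument in substance.
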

\begin{proof}
In Definition 1, we may substitute $\phi_\star = 0, \phi^\star = \delta = \zeta$. Hence it is enough to show that \[\forall j,\ 0 \le j \le m\zeta: \pi_m(j) = \pi_m(m \zeta - j).\]
$\pi_m(m \zeta - j) = \lambda(\phi^{(m)}(x)=m\zeta-j)=
\\=\sum\limits_{(\phi_1, \ldots, \phi_m)} \lambda\big(\phi(x)=\phi_1, \phi^{(2)}(x) = \phi_2, \ldots, \phi^{(m)}(x)=\phi_m\big) \mathbb{I} (\phi_1 + \ldots + \phi_k = m\zeta - j) = 
\\ = \sum\limits_{(\phi_1, \ldots, \phi_m)} \lambda\big(\phi(x)=\phi_1, \phi^{(2)}(x) = \phi_2, \ldots, \phi^{(m)}(x)=\phi_m\big) \mathbb{I} \big((\zeta-\phi_1) + \ldots + (\zeta-\phi_m) = j\big)$\\
Using \Cref{distLemma}, this equals to:\\
$\sum\limits_{(\phi_1, \ldots, \phi_m)} \lambda\big(\phi(x)=\zeta-\phi_m, \phi^{(2)}(x) = \zeta-\phi_{m-1}, \ldots, \phi^{(m)}(x)=\zeta-\phi_1\big) \mathbb{I} \big(\sum\limits_k (\zeta-\phi_k) = j\big) = \left< \psi_k := \zeta - \phi_k \right> = 
\\ =\sum\limits_{(\psi_1, \ldots, \psi_m)} \lambda(\phi(x)=\psi_1, \phi^{(2)}(x) = \psi_2, \ldots, \phi^{(m)}(x)=\psi_m) \mathbb{I} (\psi_1 + \ldots + \psi_m = j) =  \lambda(\phi^{(m)}(x)=j)= \\ =\pi_m(j)$
\end{proof}
We may describe a larger set of functionals $\phi$ having the palindromic property with the use of following lemma.
\begin{lemma}[On the affine transformations]\label{affineLemma}
Let $\omega:[0,p-2] \rightarrow [0, \zeta]$ be antipalindromic, then for any $a > 0, b \ge 0$:\\
$\phi'(x) = \begin{cases}
                    a \omega(x_0) + b, & 0 \le x_0 \le p - 2 \\
                    \phi'(\sigma x), & x_0 = p - 1
                \end{cases}$ is antipalindromic.
\end{lemma}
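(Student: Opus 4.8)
The plan is to reduce the statement to \Cref{pal_th} by exhibiting $\phi'$ as a \emph{pointwise} affine image of $\phi$, rather than by trying to apply the theorem to $\phi'$ afresh. First I would prove the identity $\phi'(x) = a\,\phi(x) + b$ for every $x \in \Gamma'$ by induction on $\mathrm{order}(x)$, the least index $k$ with $x_k \ne p-1$ (finite on $\Gamma'$). In the base case $\mathrm{order}(x)=0$ we have $0 \le x_0 \le p-2$, so $\phi'(x) = a\omega(x_0)+b = a\phi(x)+b$ straight from the definitions. In the inductive step $x_0 = p-1$, both functionals recurse to $\sigma x$, which has order one less, so $\phi'(x) = \phi'(\sigma x) = a\phi(\sigma x)+b = a\phi(x)+b$.

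Next I would pass to the ergodic sums. Summing the identity along the orbit gives $\phi'^{(m)}(x) = \sum_{k=0}^{m-1}\phi'(S^k x) = a\,\phi^{(m)}(x) + mb$, so the law $\pi'_m$ of $\phi'^{(m)}$ is the pushforward of $\pi_m$ under the injective affine map $t \mapsto at + mb$ (injective since $a>0$); concretely $\pi'_m(at+mb) = \pi_m(t)$, and $\pi'_m$ vanishes off the image of the support of $\pi_m$. Since $\phi_\star = 0$ and $\phi^\star = \zeta$ for the antipalindromic $\omega$, the affine relation yields $\phi'_\star = b$, $\phi'^\star = a\zeta + b$, and $\delta' = a\zeta$.

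Finally I would verify the palindromic property of $\phi'$ by substitution. Fixing $0 \le j \le m\delta'$ and solving $m\phi'_\star + j = at + mb$ and $m\phi'^\star - j = at' + mb$ gives $t = j/a$ and $t' = m\zeta - j/a$, whence $\pi'_m(m\phi'_\star + j) = \pi_m(j/a)$ and $\pi'_m(m\phi'^\star - j) = \pi_m(m\zeta - j/a)$. These are equal by the palindromic identity $\pi_m(s) = \pi_m(m\zeta - s)$ supplied by \Cref{pal_th}, both sides vanishing simultaneously when $j/a$ lies off the support of $\pi_m$. This is exactly the definition of the palindromic property applied to $\phi'$.

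The one genuine subtlety — and the step I expect to need the most care to \emph{state} cleanly rather than to prove — is terminological. Read literally through the \emph{range} clause, ``$\phi'$ is antipalindromic'' cannot hold, because $\omega'(j) = a\omega(j)+b$ has $\mathrm{Ran}\,\omega' = \{b, \ldots, a\zeta + b\} \ne [0,\zeta']$ as soon as $a \ne 1$ or $b \ne 0$. What actually survives the affine map is the reflection symmetry $\omega'(j) + \omega'(p-2-j) = a\zeta + 2b$ about the midpoint of the support, and it is precisely this symmetry that the reduction above converts into the palindromic property. I would therefore phrase the conclusion as ``$\phi'$ has the palindromic property'' and emphasise that, since \Cref{pal_th} does not apply to $\phi'$ directly (the normalisation fails), it is the affine \emph{transport} of the property, not a reapplication of the theorem, that carries the argument.
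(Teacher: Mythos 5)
Your proof is correct and follows essentially the same route as the paper's: both transport the palindromic property of $\pi_m$ through the affine correspondence $\phi'^{(m)}(x) = a\,\phi^{(m)}(x) + mb$ (the paper phrases this via the inverse bijection $i^{(m)}$, you via a pointwise induction on $\mathrm{order}(x)$), and both dispose of the values $j$ not divisible by $a$ by noting that the corresponding probabilities vanish on each side. Your closing remark is also on target: the lemma's conclusion should read ``$\phi'$ has the palindromic property'' rather than ``is antipalindromic,'' and that is in fact exactly what the paper's own proof establishes.
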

\begin{proof}
Let $\pi_m'(j) = \lambda(\phi'^{(m)}(x)=j)$. In terms of Definition 1, $\phi_\star = b, \phi^\star = a\zeta + b, \delta = a\zeta$. Let us prove that $\pi_m'(mb + j) = \pi_m' (m(a\zeta+b)-j)$.

First, we perform the division with remainder: $j = qa + r$. It follows from the construction of $\phi'$ that $\pi_m'(mb + qa + r) = 0$ if $r \ne 0$. Yet, $m(a\zeta+b)-j = m(a\zeta+b)-qa - r = mb + (m\zeta -q)a - r$ and hence $\pi_m' (m(a\zeta+b)-j) = 0$ if $r \ne 0$.\\
Thus, it remains to prove that $\pi_m'(mb + qa) = \pi_m' (m(a\zeta+b)-qa)$.
Note that we may restore the values of $\phi$ produced by $\omega$ from the values of $\phi'$. Indeed, consider the bijection $i: \{b, a + b, 2a + b \ldots, a\zeta + b\} \rightarrow [0, \zeta]$ such that $i(j) = \frac{j-b}{a}$. It's easy to see that $i(\phi'(x))=\phi(x)$. Subsequently, we may define $i^{(m)}(j) = \frac{j-mb}{m}$ and conclude $i^{(m)}(\phi'^{(m)}(x))=\phi(x)$.

Now let us prove $\pi_m'(mb + qa) = \pi_m' (m(a\zeta+b)-qa)$ using the fact that $i^{(m)}$ is bijective.

\[\pi_m'(mb + qa) = \lambda(\phi'^{(m)}(x)=mb + qa) = \lambda\big(i^{(m)}(\phi'^{(m)}(x))=i^{(m)}(mb + qa)\big)  =\]\[= \lambda(\phi^{(m)}(x)= \frac{mb-qa-mb}{m})= \lambda(\phi^{(m)}(x)= q)=\pi_m(q)\]
Similarly, $\pi_m' (m(a\zeta+b)-qa) = \pi_m(m\zeta - q)$. Since $\omega$ is antipalindromic, it follows from Theorem 2 that $\pi_m(q) = \pi_m(m \zeta - q)$ and then $\pi_m'(mb + qa) = \pi_m' (m(a\zeta+b)-qa)$.
\end{proof}
\begin{proposition}[On inheritance of palindromic property]
Let $\phi$ have the palindromic property, and let there be $\phi'$ such that for any $m \in \mathbb{N}$ there exists bijection $i^{(m)}: \mathrm{Ran}\ \phi'^{(m)} \rightarrow \mathrm {Ran}\ \phi^{(m)}$. Then $\phi'$ has the palindromic property.
\end{proposition}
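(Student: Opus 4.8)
The plan is to pull the palindromic identity for $\phi$ back along the maps $i^{(m)}$. Fix $m$ and introduce the reflection $R(k) := m\phi_\star + m\phi^\star - k$ of $[m\phi_\star, m\phi^\star]$ attached to $\phi$, together with the corresponding $R'(k) := m\phi'_\star + m\phi'^\star - k$ attached to $\phi'$. In this language the palindromic property of $\phi$ reads $\pi_m(R(k)) = \pi_m(k)$ for all $k$, while the goal is to prove $\pi_m'(R'(k)) = \pi_m'(k)$. The first ingredient I would record is that $i^{(m)}$ transports the distribution: exactly as in \Cref{affineLemma}, the map comes with the pointwise identity $i^{(m)}(\phi'^{(m)}(x)) = \phi^{(m)}(x)$, whence, using injectivity of $i^{(m)}$, $\pi_m'(k) = \lambda(\phi'^{(m)} = k) = \lambda(\phi^{(m)} = i^{(m)}(k)) = \pi_m(i^{(m)}(k))$, that is, $\pi_m' = \pi_m \circ i^{(m)}$.

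Granting the transport identity, the whole proposition reduces to a single relation: that $i^{(m)}$ intertwines the two reflections, $i^{(m)} \circ R' = R \circ i^{(m)}$. Indeed, under that relation
\[\pi_m'(R'(k)) = \pi_m\big(i^{(m)}(R'(k))\big) = \pi_m\big(R(i^{(m)}(k))\big) = \pi_m\big(i^{(m)}(k)\big) = \pi_m'(k),\]
where the middle equality is the palindromic property of $\phi$ (\Cref{pal_th}) and the outer two are the transport identity. To obtain the intertwining I would enumerate $\mathrm{Ran}\ \phi'^{(m)} = \{a_1 < \cdots < a_N\}$ and $\mathrm{Ran}\ \phi^{(m)} = \{b_1 < \cdots < b_N\}$ (equal cardinalities, as $i^{(m)}$ is a bijection); if $i^{(m)}$ is order-preserving then $i^{(m)}(a_k) = b_k$, and if both ranges are symmetric under their reflections — for $\phi$ this is precisely the palindromic property, giving $R(b_k) = b_{N+1-k}$, and for $\phi'$ it gives $R'(a_k) = a_{N+1-k}$ — then $i^{(m)}(R'(a_k)) = i^{(m)}(a_{N+1-k}) = b_{N+1-k} = R(b_k) = R(i^{(m)}(a_k))$, which is the required relation.

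The main obstacle is exactly this intertwining, since a bare bijection of the ranges carries no such information: one can reorder the masses of a symmetric $\pi_m$ into a non-symmetric $\pi_m'$, so the conclusion can only hold for the structured maps that actually occur in the constructions. I would therefore make explicit in the hypothesis that $i^{(m)}$ is monotone and that $\mathrm{Ran}\ \phi'^{(m)}$ is symmetric about its midpoint, or — following the prototype of \Cref{affineLemma} — observe that both properties are automatic whenever $i^{(m)}$ is affine with positive slope: such a map commutes with the central reflections of the two intervals and carries the symmetric set $\mathrm{Ran}\ \phi^{(m)}$ back onto a symmetric $\mathrm{Ran}\ \phi'^{(m)}$. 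Once the intertwining is secured, the displayed one-line computation finishes the proof.
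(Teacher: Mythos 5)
Your proposal follows the same route the paper intends: the paper gives no separate argument for this proposition, saying only that the proof is the same as for \Cref{affineLemma}, i.e.\ one pulls the palindromic identity for $\pi_m$ back along $i^{(m)}$ exactly as you do, via the transport identity $\pi_m' = \pi_m \circ i^{(m)}$ followed by the reflection intertwining. What you add --- and it is the valuable part of your write-up --- is the observation that the proposition as literally stated is under-hypothesized: the mere existence of a bijection between $\mathrm{Ran}\ \phi'^{(m)}$ and $\mathrm{Ran}\ \phi^{(m)}$ says only that the two ranges have equal cardinality and carries no information tying $\pi_m'$ to $\pi_m$, so the conclusion cannot follow from it as written. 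The two missing ingredients you isolate are precisely the ones the affine prototype supplies implicitly: the pointwise compatibility $i^{(m)}(\phi'^{(m)}(x)) = \phi^{(m)}(x)$, which yields the transport identity, and the fact that an affine $i^{(m)}$ with positive slope commutes with the central reflections of the two ranges, which yields the intertwining $i^{(m)} \circ R' = R \circ i^{(m)}$. With those hypotheses added, your displayed computation is complete and correct; without them the statement is not a theorem. So there is no gap in your argument relative to the paper's --- rather, you have correctly identified, and repaired, a gap in the statement itself that the paper's one-line proof glosses over.
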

The proof of this is the same as for \Cref{affineLemma}.

\begin{theorem}\label{pal_property}
The polynomials $P_m^p$ produced by the generalized Chacon automorphism have the palindromic property for any $p \ge 3$.
\end{theorem}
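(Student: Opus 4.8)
The plan is to obtain this theorem as an immediate corollary of \Cref{pal_th}, once the generalized Chacon automorphism is matched with the correct choice of $\omega$. First I would recall, from \Cref{definitions}, that the generalized Chacon automorphism is by construction the integral automorphism over the $p$-adic odometer whose roof is the ``first not $(p-1)$'' functional $\phi$, and that this $\phi$ is exactly the one produced by $\omega(j)=j$ on $[0,p-2]$ (the value $p=3$ recovering the classical Chacon map). The polynomials attached to this automorphism are therefore literally the $P_m^p(t)=\mathbb{E}_\lambda[t^{\phi^{(m)}(x)}]=\sum_j \pi_m(j)\,t^j$ generated by this $\phi$; so asserting that these polynomials are palindromic is the same as asserting the palindromic property of this particular $\phi$ in the sense of Definition 2.

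Second, I would apply the machinery already in place. The Example verifies that $\omega(j)=j$ is antipalindromic for every $p\ge 3$, with $\mathrm{Ran}\,\omega=[0,p-2]$ and hence $\zeta=p-2$. By \Cref{pal_th}, the corresponding $\phi$ then has the palindromic property. Reading off the relevant constants for this $\omega$, namely $\phi_\star=0$ and $\phi^\star=\delta=\zeta=p-2$, the palindromic property becomes the symmetry $\pi_m(j)=\pi_m\big(m(p-2)-j\big)$, valid for all $0\le j\le m(p-2)$.

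Finally, I would translate this symmetry of $\pi_m$ into the palindromicity of the polynomial itself. Because $\phi_\star=0$, the lowest-order coefficient of $P_m^p$ sits at $t^0$ and the top degree is $m(p-2)$, so the identity $\pi_m(j)=\pi_m(m(p-2)-j)$ says precisely that the coefficient vector reads the same in both directions, i.e.\ $t^{m(p-2)}P_m^p(1/t)=P_m^p(t)$, which is the palindromic property of $P_m^p$. There is no substantial obstacle in this argument: all the content is carried by \Cref{pal_th} and the Example. The only points needing genuine care are the identification of the roof function of the generalized automorphism with the $\phi$ coming from $\omega(j)=j$, and the elementary bookkeeping $\phi_\star=0$, $\delta=p-2$ that lets the symmetry of $\pi_m$ be read as a palindrome of the coefficients of $P_m^p$.
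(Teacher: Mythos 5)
Your proposal is correct and follows exactly the paper's route: the Example establishes that $\omega(j)=j$ is antipalindromic for every $p\ge 3$, and the theorem is then a direct corollary of \Cref{pal_th}. The extra bookkeeping you supply ($\phi_\star=0$, $\delta=p-2$, and the reading of the symmetry of $\pi_m$ as palindromicity of the coefficients) is a harmless elaboration of what the paper leaves implicit.
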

\begin{proof}
We have already shown that $\omega(j)=j$ is antipalindromic. Hence this theorem is a direct corollary of  \Cref{pal_th}.
\end{proof}

\section{Recurrence formulae for $P_m^p(t)$}\label{recurrence}
\begin{notation} We denote $n$th triangle number $\frac{n(n+1)}{2}$ as $\Delta_n$ \end{notation}
\begin{lemma}\label{simpleCase} $P_{pm}^p(t)=t^{m\Delta_{p-2}}P_m^p(t)$
\end{lemma}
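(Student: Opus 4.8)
The plan is to reduce the statement to the pointwise identity
\[
\phi^{(pm)}(x) = m\Delta_{p-2} + \phi^{(m)}(\sigma x) \qquad \text{for }\lambda\text{-a.e. } x\in\Gamma',
\]
where $\phi$ is the classical functional (the value $\omega(j)=j$), so that $\phi(x+k)=(x+k)_0$ whenever the leading digit of $x+k$ differs from $p-1$. Granting this identity, the lemma follows at once: since the coordinates of $x$ are i.i.d.\ under $\lambda$, the push-forward satisfies $\sigma_\ast\lambda=\lambda$, hence $\phi^{(m)}(\sigma x)$ has the same distribution as $\phi^{(m)}(x)$ and
\[
P_{pm}^p(t)=\mathbb{E}_\lambda\!\left[t^{\phi^{(pm)}(x)}\right]=t^{m\Delta_{p-2}}\,\mathbb{E}_\lambda\!\left[t^{\phi^{(m)}(\sigma x)}\right]=t^{m\Delta_{p-2}}P_m^p(t).
\]

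To establish the pointwise identity I would expand $\phi^{(pm)}(x)=\sum_{k=0}^{pm-1}\phi(S^kx)$ and split the $pm$ offsets according to the leading digit of $x+k$. The elementary fact driving everything is that adding $k$ propagates carries only upward, so $(x+k)_0=(x_0+k)\bmod p$; consequently, as $k$ ranges over $0,\dots,pm-1$, the leading digit attains each value in $\{0,\dots,p-1\}$ exactly $m$ times. The $(p-1)m$ offsets with $(x+k)_0\ne p-1$ contribute $\phi(x+k)=(x+k)_0$, and summing each value $0,1,\dots,p-2$ with multiplicity $m$ gives exactly $m(0+1+\dots+(p-2))=m\Delta_{p-2}$. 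The remaining $m$ offsets are $k_l=(p-1-x_0)+lp$ for $l=0,\dots,m-1$, all lying in $[0,pm-1]$, and at each of them the leading digit equals $p-1$.

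For these last terms I would invoke the renormalization relation already isolated in the proof of \Cref{distLemma}, namely $\phi(z+pj)=\phi(\sigma z+j)$ whenever $z$ begins with $p-1$. Writing $z:=x+(p-1-x_0)$, the addition causes no carry (since $x_0+(p-1-x_0)=p-1<p$), so $z$ begins with $p-1$ and $\sigma z=\sigma x$; therefore $\phi(x+k_l)=\phi(z+lp)=\phi(\sigma z+l)=\phi(S^l\sigma x)$, and summing over $l$ yields $\sum_{l=0}^{m-1}\phi(S^l\sigma x)=\phi^{(m)}(\sigma x)$, which completes the identity. The only genuinely delicate point is the carry bookkeeping encapsulated in $\phi(z+lp)=\phi(\sigma z+l)$ — that successive occurrences of a leading $p-1$ correspond precisely to successive steps of the adding map on the shifted coordinate — but since this is exactly the relation established in \Cref{distLemma}, no new work is required and the remaining steps are routine.
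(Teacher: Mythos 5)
Your proof is correct and takes essentially the same approach as the paper: both establish the pointwise identity $\phi^{(pm)}(x)=m\Delta_{p-2}+\phi^{(m)}(\sigma x)$ by splitting the $pm$ offsets into the order-$0$ points (each residue $0,\dots,p-2$ occurring $m$ times, giving $m\Delta_{p-2}$) and the $m$ higher-order points handled via the relation $\phi(z+pj)=\phi(\sigma z+j)$ from the proof of \Cref{distLemma}, and then conclude from the $\sigma$-invariance of $\lambda$. Your write-up simply makes explicit the carry bookkeeping that the paper leaves implicit by ``recalling the structure'' of $\{\phi(S^jx)\}_{j\ge 0}$.
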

\begin{proof}
Let $x \in \Gamma'$. Recalling the structure of $\{\phi(S^j x)\}_{j \ge 0}$, the value of $\phi^{(pm)}(x)$ is the sum of:
\begin{itemize}
\item the order-$0$ points. There are exactly $(p-1)m$ points following the repeating pattern $\ldots,(p-2),0,1,2,...,(p-2),0,1,\ldots$. Their contribution is $m$ times the sum of integers $0,1,\ldots,(p-2)$ which is $\frac{(p-1)(p-2)}{2}m=m\Delta_{p-2}$
\item the higher-order points. Their contribution is $\phi^{(m)}(\sigma x)$.
\end{itemize}
Hence $\phi^{pm}(x) = m\Delta_{p-2} + \phi^{(m)}(\sigma x)$. By the definition of polynomials $P_m(t)$ it implies 
$P_pm(t) = \mathbb{E}_\lambda\left[ t^{\phi^{(pm)}(x)}\right] = 
\mathbb{E}_\lambda\left[ t^{m\Delta_{p-2} + \phi^{(m)}(\sigma x)}\right] = 
t^{m\Delta_{p-2}} \mathbb{E}_\lambda \left[ t^{\phi^{(m)} (\sigma x)}  \right] = 
t^{m\Delta_{p-2} }P_m(t)$
\end{proof}
\begin{notation}
Similarly to $\phi^{(m)}$, we denote $\omega^{(m)}(j) := \omega(j) + \omega(j+1) + \ldots + \omega(j+m-1)$ if $j + m < p - 1$.
\end{notation}
\begin{proposition} $\omega^{(k)}(j) = k j + \Delta_{k-1}$
\end{proposition}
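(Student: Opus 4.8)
The plan is to use the classical form $\omega(j) = j$ that is fixed for the generalized Chacon automorphism throughout this section, and to evaluate the defining sum directly. By the notation just introduced, $\omega^{(k)}(j) = \sum_{i=0}^{k-1} \omega(j+i)$, so substituting $\omega(j) = j$ turns this into the arithmetic sum $\sum_{i=0}^{k-1}(j+i)$. The whole proposition is then an elementary identity for an arithmetic progression, and the task is simply to evaluate it correctly.

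First I would split the sum into its constant and its index-dependent parts, writing $\sum_{i=0}^{k-1}(j+i) = \sum_{i=0}^{k-1} j + \sum_{i=0}^{k-1} i = kj + \sum_{i=0}^{k-1} i$. The first term contributes $kj$ immediately, there being $k$ summands each equal to $j$. The second term is the sum of the first $k-1$ nonnegative integers, which by the definition of the triangle number is exactly $\Delta_{k-1} = \frac{(k-1)k}{2}$. Combining the two pieces gives $\omega^{(k)}(j) = kj + \Delta_{k-1}$, as claimed.

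As an alternative I would record a one-line induction on $k$. The base case $k=1$ reads $\omega^{(1)}(j) = \omega(j) = j = 1\cdot j + \Delta_0$, which matches since $\Delta_0 = 0$. For the step I would write $\omega^{(k+1)}(j) = \omega^{(k)}(j) + \omega(j+k) = kj + \Delta_{k-1} + (j+k)$ and then invoke the purely arithmetic identity $\Delta_{k-1} + k = \Delta_k$, which follows from $\frac{(k-1)k}{2} + k = \frac{k(k+1)}{2}$. This produces $\omega^{(k+1)}(j) = (k+1)j + \Delta_k$, closing the induction.

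There is no genuine obstacle here, so rather than a hard step the only thing demanding attention is index bookkeeping: I must ensure that the upper summation limit $k-1$ really yields $\Delta_{k-1}$ and not $\Delta_k$, and that the side condition $j + k < p-1$ attached to the notation is respected. The latter matters because it guarantees that every argument $j+i$ with $0 \le i \le k-1$ lands in the range $[0, p-2]$, so that $\omega(j+i) = j+i$ holds for each summand and no argument falls into the $x_0 = p-1$ branch of $\phi$; once that is in place, the computation above is valid without qualification.
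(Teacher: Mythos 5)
Your proof is correct and follows essentially the same route as the paper, which simply observes that $\omega^{(k)}(j)$ is the sum of the arithmetic progression $j, j+1, \ldots, j+k-1$; you merely spell out the evaluation (and the side condition $j+k < p-1$ ensuring each summand uses $\omega(j+i)=j+i$) in more detail.
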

\begin{proof}
The value of $\omega^{(k)}(x_0)$ is the sum of an arithmetic progression $x_0, x_0 + 1, \ldots, x_0 + k-1$.
\end{proof}
\begin{lemma} \label{phiLemma}
Let $0 < k < p$. Consider $x=(x_0,x_1,\ldots) \in \Gamma'$.

$\phi^{(pm+k)}(x)= 
\begin{cases}
\omega^{(k)}(x_0) + m\Delta_{p-2} + \phi^{(m)}(\sigma x), & x_0 < p-k\\
\omega^{(p-1-x_0)}(x_0) + \Delta_{x_0+k-p-1}+m\Delta_{p-2}+\phi^{(m+1)}(\sigma x), & x_0 \ge p-k
\end{cases}$
\end{lemma}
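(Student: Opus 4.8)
The plan is to write $\phi^{(pm+k)}(x)=\sum_{j=0}^{pm+k-1}\phi(S^{j}x)$, split it into the first $pm$ summands and the trailing $k$ summands, dispose of the first block with \Cref{simpleCase}, and then classify the trailing block by the leading digit $x_0$. Throughout I use the elementary periodicity that the leading coordinate of $S^{j}x=x+j$ equals $(x_0+j)\bmod p$, so that the higher-order positions (those $j$ at which $\phi$ is \emph{not} read off $x_0$, i.e.\ the leading digit equals $p-1$) are exactly the indices $j_\ell:=(p-1-x_0)+p\ell$, $\ell\ge 0$, and I use the shift identity $\phi(y+p\ell)=\phi(\sigma y+\ell)$, valid whenever $y$ has leading digit $p-1$, as established in the proof of \Cref{distLemma}.

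For the first $pm$ summands I simply invoke \Cref{simpleCase} in the form $\phi^{(pm)}(x)=m\Delta_{p-2}+\phi^{(m)}(\sigma x)$. The higher-order part $\phi^{(m)}(\sigma x)=\sum_{\ell=0}^{m-1}\phi(\sigma x+\ell)$ here is precisely the contribution of the positions $j_0,\dots,j_{m-1}$, since $j_{m-1}=pm-1-x_0<pm\le j_m$; this is where the shift identity is applied, with $y=x+(p-1-x_0)$ satisfying $\sigma y=\sigma x$.

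Next I treat the trailing sum $\sum_{i=0}^{k-1}\phi(x+pm+i)$. Because $pm\equiv 0\pmod p$, the leading digits of these $k$ points are $x_0,x_0+1,\dots,x_0+k-1$ reduced mod $p$, so whether the residue $p-1$ occurs among them is governed precisely by the dichotomy $x_0<p-k$ versus $x_0\ge p-k$. When $x_0<p-k$ all these digits lie in $[0,p-2]$, every trailing term is an order-$0$ contribution, their sum is $\omega^{(k)}(x_0)$ by definition, and adding the first block gives the upper branch. When $x_0\ge p-k$ the residue $p-1$ is hit exactly once, at $i=p-1-x_0\in[0,k-1]$, i.e.\ at the index $j_m=pm+(p-1-x_0)$, which is precisely the next slot $\ell=m$ of the progression $j_\ell$; its value $\phi(\sigma x+m)$ therefore extends the earlier higher-order sum to $\phi^{(m+1)}(\sigma x)=\sum_{\ell=0}^{m}\phi(\sigma x+\ell)$. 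The remaining order-$0$ trailing digits break into the ascending run $x_0,\dots,p-2$ before the wraparound, summing to $\omega^{(p-1-x_0)}(x_0)$, and the run $0,1,\dots,x_0+k-1-p$ after it, summing to $\Delta_{x_0+k-p-1}$; assembling these with $m\Delta_{p-2}$ and $\phi^{(m+1)}(\sigma x)$ gives the lower branch.

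The delicate step is the bookkeeping in the second case: one must recognize that the single higher-order index of the trailing block is not a new phenomenon but exactly the $\ell=m$ term of the same arithmetic progression $j_\ell$ that produced $\phi^{(m)}(\sigma x)$, so that the two higher-order contributions merge into $\phi^{(m+1)}(\sigma x)$. I would also check the degenerate boundary $x_0=p-k$, where the post-wraparound run is empty; here $x_0+k-p-1=-1$ and the convention $\Delta_{-1}=0$ keeps the formula correct.
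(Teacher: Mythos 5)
Your proof is correct, and it rests on the same two ingredients as the paper's: the pointwise identity $\phi^{(pm)}(y)=m\Delta_{p-2}+\phi^{(m)}(\sigma y)$ established in the proof of \Cref{simpleCase}, and a case analysis of a block of $k$ consecutive leading digits according to whether the residue $p-1$ occurs in it. The one structural difference is the decomposition: you split off the block of length $k$ at the \emph{end}, writing $\phi^{(pm+k)}(x)=\phi^{(pm)}(x)+\sum_{i=0}^{k-1}\phi(S^{pm+i}x)$, whereas the paper splits it off at the \emph{beginning}, writing $\phi^{(pm+k)}(x)=\phi^{(k)}(x)+\phi^{(pm)}(S^{k}x)$. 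Your ordering is slightly cleaner in the second case: the paper must track that $\sigma S^{k}x$ equals $S\sigma x$ rather than $\sigma x$ when a carry occurs, and then undo this via the identity $\phi^{(m)}(S\sigma x)=\phi^{(m+1)}(\sigma x)-\phi(\sigma x)$; in your version the $\phi^{(pm)}$ block is always applied to $x$ itself, and the single higher-order term of the tail is absorbed directly as the $\ell=m$ term of the progression $j_\ell=(p-1-x_0)+p\ell$ that already produced $\phi^{(m)}(\sigma x)$, yielding $\phi^{(m+1)}(\sigma x)$ with no correction term. Your explicit check of the boundary $x_0=p-k$ (empty post-wraparound run, $\Delta_{-1}=0$) is a detail the paper passes over silently.
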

\begin{proof}
First, by the definition of $\phi^(k)$ we state $$\phi^{(pm+k)}(x) = \phi(x) + \phi(Sx) + \ldots + \phi(S^{k-1}x) + \phi^{(pm)}(S^k x)= \phi^{(k)}(x) + \phi^{(pm)}(S^k x).$$
With the use of the previous lemma holds the equality $\phi^{(pm+k)}(x) =  \phi^{(k)}(x) + m\Delta_{p-2} + \phi^{(m)}(\sigma S^k x)$.
By the definition of $\phi(x)$ there are two cases in the computation of $\phi^{(pm+k)}(x)$:
\begin{itemize}
\item The regular case $x_0 < p-k $. In this case each term in $\phi^{(k)}(x)$ is computed directly: $\phi^{(k)}(x) = \omega^{(k)}(x)$. Yet $\sigma S^k x = \sigma x$: since $S^k$ affects only the first digit of $x$, its effect if erased from $\sigma S^k x$. We may compute $\phi^{(pm+k)}(x)=\omega^{(k)}(x_0)+m\Delta_{p-2}+\phi^{(m)}(\sigma x)$.
\item In another case, if $x_0 \ge p-k $, some term  in $\phi^{(k)}(x)$ evaluates with recursion: there exists $j \in [0,k-1]$ such that $S^j x$ begins with $p-1$ and hence $\phi(S^j x) = \phi(\sigma x)$. Therefore we may not compute $\phi^{(k)}(x)$ directly. Instead, we divide it into $\phi^{(p-1-x_0)}(x)=\omega^{(p-1-x_0)}(x)$, which is computed directly, and the rest of the terms. The latter form the sum $Z := \phi(S^j x) + \phi(S^{j+1}x) + \ldots + \phi(S^{k-1})$. Consider this sum. As stated before, $S^j x$ begins with $p-1$, so the first digits of $S^{j+1}x, S^{j+2}x, \ldots, S^{k-1}$ are $0,1,\ldots, x_0+k-p-1$. We know $\phi(S^j x) = \phi(\sigma x)$, and, knowing the first digits of the following terms, we may evaluate $Z = \phi(\sigma x) + 0 + 1 + \ldots + (x_0+k-p-1) =  \phi(\sigma x) + \Delta_{x_0+k-p-1}$. Now we aggregate $\phi^{(k)}(x)=\omega^{(p-1-x_0)}(x) + \Delta_{x_0+k-p-1}+\phi(\sigma x)$. Using the equality $\phi^{(m)}(\sigma S^k x) = \phi^{(m)}(S\sigma x) = \phi^{(m+1)}(\sigma x) - \phi(\sigma x)$, we get 
$$\phi^{(pm+k)}(x) =  \phi^{(k)}(x) + m\Delta_{p-2} + \phi^{(m)}(\sigma S^k x) = $$ $$ = \omega^{(p-1-x_0)}(x) + \Delta_{x_0+k-p-1}+\phi(\sigma x)+ m\Delta_{p-2} + \phi^{(m+1)}(\sigma x) - \phi(\sigma x) = $$ $$ = \omega^{(p-1-x_0)}(x_0) + \Delta_{x_0+k-p-1}+m\Delta_{p-2}+\phi^{(m+1)}(\sigma x).$$
\end{itemize}
\end{proof}
\begin{lemma} \label{complexCase}
Let $0 < k < p$.

$$P_{pm+k}^p(t)  =   \frac{1}{p} t^{m\Delta_{p-2} + \Delta_{k-1}}\sum\limits_{j=0}^{p-k-1} t^{jk} P_m^p(t) 
      + \frac{1}{p} t^{m\Delta_{p-2} + \Delta_{k-2}}\sum\limits_{j=0}^{k-1} t^{j(p-k)} P_{m+1}^p(t) $$
\end{lemma}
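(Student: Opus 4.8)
The plan is to compute $P_{pm+k}^p(t) = \mathbb{E}_\lambda\left[t^{\phi^{(pm+k)}(x)}\right]$ directly by conditioning on the first coordinate $x_0$ and invoking \Cref{phiLemma}. Under the Haar measure $\lambda$ the coordinate $x_0$ is uniform on $\{0,\ldots,p-1\}$, with each value carrying weight $1/p$, and it is independent of $\sigma x$; moreover $\sigma$ preserves $\lambda$, so $\sigma x$ has the same law as $x$ and hence $\mathbb{E}_\lambda\left[t^{\phi^{(m)}(\sigma x)}\right] = P_m^p(t)$ and $\mathbb{E}_\lambda\left[t^{\phi^{(m+1)}(\sigma x)}\right] = P_{m+1}^p(t)$, exactly as in \Cref{simpleCase}. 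Splitting the expectation according to whether $x_0 < p-k$ or $x_0 \ge p-k$ therefore produces two sums, one carrying a factor $P_m^p(t)$ and one carrying a factor $P_{m+1}^p(t)$, which already matches the shape of the claimed formula.

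In the first (regular) case $x_0 \in \{0,\ldots,p-k-1\}$, \Cref{phiLemma} gives $\phi^{(pm+k)}(x) = \omega^{(k)}(x_0) + m\Delta_{p-2} + \phi^{(m)}(\sigma x)$, and the preceding proposition evaluates $\omega^{(k)}(x_0) = k x_0 + \Delta_{k-1}$. Summing $t^{k x_0 + \Delta_{k-1} + m\Delta_{p-2}}$ over $x_0 = 0,\ldots,p-k-1$ with weight $1/p$ and factoring out $\mathbb{E}_\lambda\left[t^{\phi^{(m)}(\sigma x)}\right] = P_m^p(t)$ yields the first term of the statement verbatim.

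For the second case $x_0 \in \{p-k,\ldots,p-1\}$ I would substitute $x_0 = p-k+j$ with $j \in \{0,\ldots,k-1\}$, so that $p-1-x_0 = k-1-j$ and $x_0+k-p-1 = j-1$. Then the exponent from \Cref{phiLemma} becomes $\omega^{(k-1-j)}(p-k+j) + \Delta_{j-1} + m\Delta_{p-2} + \phi^{(m+1)}(\sigma x)$. The crux is the triangle-number identity $\omega^{(k-1-j)}(p-k+j) + \Delta_{j-1} = (k-1-j)(p-k) + \Delta_{k-2}$, which follows from the proposition together with the elementary relation $\Delta_{k-2-j} + \Delta_{j-1} = \Delta_{k-2} - (k-1)j + j^2$: substituting this in and expanding $(k-1-j)(p-k+j)$ causes all quadratic and linear terms in $j$ to cancel. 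Summing $t^{(k-1-j)(p-k) + \Delta_{k-2} + m\Delta_{p-2}}$ with weight $1/p$ and factoring out $P_{m+1}^p(t)$, a final reindexing $j \mapsto k-1-j$ turns $\sum_{j} t^{(k-1-j)(p-k)}$ into $\sum_{j=0}^{k-1} t^{j(p-k)}$, producing the second term.

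The main obstacle is the algebraic simplification in the second case: one must correctly expand $\omega^{(k-1-j)}(p-k+j)$ through the proposition, combine the two triangle numbers $\Delta_{k-2-j}$ and $\Delta_{j-1}$, and verify that everything collapses to $(k-1-j)(p-k) + \Delta_{k-2}$. The subsequent reversal of the summation index is then precisely what reconciles the $(k-1-j)$ arising from the case analysis with the $\sum_{j=0}^{k-1} t^{j(p-k)}$ appearing in the claim; overlooking this reindexing would make the two expressions look different despite being equal.
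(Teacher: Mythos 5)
Your proposal is correct and follows essentially the same route as the paper: condition on $x_0$ (uniform and independent of $\sigma x$), apply \Cref{phiLemma} in each of the two regimes, use $\omega^{(k)}(j)=kj+\Delta_{k-1}$, and collapse the exponent in the second case to $(\cdot)(p-k)+\Delta_{k-2}$ via the same triangle-number cancellation, differing only in the cosmetic choice of summation index (you reindex by $j\mapsto k-1-j$ where the paper substitutes $q=p-1-j$).
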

\begin{proof}
This lemma is shown using the previous result and the law of total expectation.
$$ P_{pm+k}(t)= \mathbb{E}_\lambda\left[ t^{\phi^{(pm+k)}(x)}\right] = $$
$$= \mathbb{E}_\lambda\left[ t^{\phi^{(pm+k)}(x)}| x_0 < p-k \right]\lambda(x_0<p-k) + \mathbb{E}_\lambda\left[ t^{\phi^{(pm+k)}(x)} | x_0 \ge p-k \right]\lambda( x_0 \ge p-k )$$
\begin{equation}\label{eq:totalExp}= \mathbb{E}_\lambda\left[ t^{\phi^{(pm+k)}(x)}| x_0 < p-k \right]\frac{p-k}{p} + \mathbb{E}_\lambda\left[ t^{\phi^{(pm+k)}(x)} | x_0 \ge p-k \right]\frac{k}{p}
\end{equation}
Now we calculate each conditional expectations according to \Cref{phiLemma}. Note that the digits of $x$ are mutually independent, hence $x_0 \indep (x_1, x_2,\ldots)$ and then $x_0 \indep \sigma x$, which is used in the evaluation.
$$\mathbb{E}_\lambda\left[ t^{\phi^{(pm+k)}(x)}| x_0 < p-k \right] = 
\mathbb{E}_\lambda\left[ t^{ \omega^{(k)}(x_0) + m\Delta_{p-2} + \phi^{(m)}(\sigma x) }| x_0 < p-k \right] = $$ 
$$ =t^{m\Delta_{p-2}}
\mathbb{E}_\lambda\left[ t^{ \omega^{(k)}(x_0)} | x_0 < p-k \right]
 \mathbb{E}_\lambda\left[ t^{\phi^{(m)}(\sigma x) } \right]
$$
The distributions of $\sigma x$ and $x$ are the same, which implies
$\mathbb{E}_\lambda\left[ t^{\phi^{(m)}(\sigma x) } \right] = \mathbb{E}_\lambda\left[ t^{\phi^{(m)}(x) }\right] = P_m^p(t)$. It remains to evaluate $\mathbb{E}_\lambda\left[ t^{ \omega^{(k)}(x_0) | x_0 < p-k }\right]$:
$$ \mathbb{E}_\lambda\left[ t^{ \omega^{(k)}(x_0)} | x_0 < p-k \right] =
 \frac{1}{p-k}\sum\limits_{j=0}^{p-k-1} t^{ \omega^{(k)}(j)} = 
 \frac{1}{p-k}\sum\limits_{j=0}^{p-k-1} t^{kj + \Delta_{k-1}} =
 \frac{1}{p-k} t^{\Delta_{k-1}}\sum\limits_{j=0}^{p-k-1} t^{kj}$$
 Thus, \begin{equation}\label{eq:firstCondExp}\mathbb{E}_\lambda\left[ t^{\phi^{(pm+k)}(x)}| x_0 < p-k \right] =
  \frac{1}{p-k} t^{m\Delta_{p-2} + \Delta_{k-1}}\sum\limits_{j=0}^{p-k-1} t^{jk} P_m^p(t)
   \end{equation}
  
   Similarly, we evaluate the second conditional expectation. 
 $$\mathbb{E}_\lambda\left[ t^{\phi^{(pm+k)}(x)}| x_0 \ge p-k \right] = 
 \mathbb{E}_\lambda\left[ t^{\omega^{(p-1-x_0)}(x_0) + \Delta_{x_0+k-p-1}+m\Delta_{p-2}+\phi^{(m+1)}(\sigma x)}| x_0 \ge p-k \right] = $$
 $$ =t^{m\Delta_{p-2}}
 \mathbb{E}_\lambda\left[ t^{\omega^{(p-1-x_0)}(x_0) + \Delta_{x_0+k-p-1}} | x_0 \ge p-k \right] 
 \mathbb{E}_\lambda\left[ t^{\phi^{(m+1)}(\sigma x) } \right] = $$
  $$ =t^{m\Delta_{p-2}}
    P_{m+1}^p(t)
 \mathbb{E}_\lambda\left[ t^{\omega^{(p-1-x_0)}(x_0) + \Delta_{x_0+k-p-1}} | x_0 \ge p-k \right] = $$
  $$ = \frac{1}{k} t^{m\Delta_{p-2}} P_{m+1}^p(t)
  \sum\limits_{j=p-k}^{p-1} t^{  (p-1-j) j + \Delta_{p-2-j} + \Delta_{j+k-p-1}}  = $$

We substitute the index $j$ with $q = p-1-j$ and simplify the exponent:
$$  (p-1-j) j + \Delta_{p-2-j} + \Delta_{j+k-p-1} =
    q(p-1-q) + \Delta_{q-1} + \Delta_{k-2-q}=
   q(p-k)+\Delta_{k-2} 
$$   
Now we collect the result: 
\begin{equation}\label{eq:secondCondExp}
\mathbb{E}_\lambda\left[ t^{\phi^{(pm+k)}(x)}| x_0 \ge p-k \right] 
 =  \frac{1}{k} t^{m\Delta_{p-2} + \Delta_{k-2}} P_{m+1}^p(t)
  \sum\limits_{q=0}^{k-1} t^{q(p-k)}
  \end{equation}
It remains to substitute (\ref{eq:firstCondExp}) and (\ref{eq:secondCondExp}) into (\ref{eq:totalExp}) to obtain the recurrence equation:
$$P_{pm+k}(t)  =   \frac{1}{p} t^{m\Delta_{p-2} + \Delta_{k-1}}\sum\limits_{j=0}^{p-k-1} t^{jk} P_m^p(t) 
      + \frac{1}{p} t^{m\Delta_{p-2} + \Delta_{k-2}}\sum\limits_{j=0}^{k-1} t^{j(p-k)} P_{m+1}^p(t) $$
\end{proof}
Bringing together the results of \Cref{simpleCase} and \Cref{complexCase}, we obtain the general recurrence law for $P_{n}^p(t)$
\begin{theorem}[Recurrence formulae on $P_{n}^p(t)$] 
\begin{align}
\label{P_pm} P_{pm}^p(t) & =  t^{m\Delta_{p-2}}\ P_m^p(t) \\
\label{P_pm+k} P_{pm+k}^p(t) & =  \frac{1}{p} t^{m\Delta_{p-2} + \Delta_{k-1}}\sum\limits_{j=0}^{p-k-1} t^{jk} P_m^p(t)  + \frac{1}{p} t^{m\Delta_{p-2} + \Delta_{k-2}}\sum\limits_{j=0}^{k-1} t^{j(p-k)} P_{m+1}^p(t), \ 0 < k < p
 \end{align}
\end{theorem}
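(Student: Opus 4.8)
The plan is to recognize that this theorem is merely the consolidation of \Cref{simpleCase} and \Cref{complexCase}, indexed by the Euclidean division of $n$ by $p$. Writing an arbitrary $n \ge 1$ as $n = pm + k$ with quotient $m = \lfloor n/p \rfloor$ and remainder $0 \le k < p$, the residue $k = 0$ and the residues $0 < k < p$ are mutually exclusive and jointly exhaustive, so the two displayed identities together account for every index $n$.

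First I would note that for $k = 0$, equation \eqref{P_pm} is verbatim the conclusion of \Cref{simpleCase}, where the pointwise identity $\phi^{(pm)}(x) = m\Delta_{p-2} + \phi^{(m)}(\sigma x)$ was established and then pushed through the expectation $\mathbb{E}_\lambda$, using that $x$ and $\sigma x$ are equidistributed under $\lambda$. Next, for $0 < k < p$, equation \eqref{P_pm+k} is verbatim the conclusion of \Cref{complexCase}, which was obtained from the two-branch formula of \Cref{phiLemma} together with the law of total expectation, splitting on the event $\{x_0 < p-k\}$ against $\{x_0 \ge p-k\}$ and exploiting the independence $x_0 \indep \sigma x$.

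Since both identities are already proven, I do not expect any real obstacle at this stage: the substantive effort sits entirely in the case analysis of \Cref{phiLemma} and the exponent bookkeeping of \Cref{complexCase}, and the present step is pure assembly. The single point I would take care to record is that the decomposition $n = pm + k$ makes the two cases exhaustive, so the pair of recurrences holds for every $n \ge 1$; the one mild subtlety, should one wish to read the relations as a forward recursion from the base value $P_0^p(t) \equiv 1$, is that at $n = 1$ one has $m + 1 = n$, so \eqref{P_pm+k} refers to $P_1^p$ on both sides, but the coefficient of $P_{m+1}^p$ there equals $1/p < 1$, so the relation still pins down $P_1^p$ uniquely.
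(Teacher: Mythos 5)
Your proof matches the paper's: the theorem is stated there precisely as the assembly of \Cref{simpleCase} (the case $k=0$) and \Cref{complexCase} (the case $0<k<p$) under the Euclidean decomposition $n=pm+k$, with no further argument required. Your added observation that at $n=1$ the relation \eqref{P_pm+k} is implicit in $P_1^p$ but still determines it uniquely is correct and consistent with the paper's later use of $P_1^p(t)=\frac{1}{p-1}\sum_{j=0}^{p-2}t^j$.
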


\section{Degrees of polynomials}\label{degrees}
In this section, our goal is to describe a handy symmetric form of the polynomials $P_m^p(t)$. This requires a more detailed consideration of the degrees of these polynomials.
\subsection{Sequence $\hat{S}_j$}
Here we define the substitution-produced sequence $\{\hat{S}_j\}_{j \ge 0}$. It will be useful to evaluate the degrees of $P_m^p(t)$. We build this sequence recursively. First, fix the parameter value $p \ge 3$.
\begin{definition}[Zero-tier construction]
Let $\hat{S}_0^{(0)} = p-2,\ \hat{S}_j^{(0)} = p-2-(j-1)$ for $j = \overline{1, p-1}$.
\end{definition}
Literally, $\{ \hat{S}_j^{(0)}\}$ is $p-2, p-2, p-3, \ldots, 2, 1, 0$.
\begin{definition} [Number substitution rule]
We substitute
\begin{itemize}
 \item $0$ with $p-2, p-3, \ldots, 2, 1, 0, 0$;
 \item $1$ with $p-2, p-3, \ldots, 2, 1, 1, 0$;
 \item $2$ with $p-2, p-3, \ldots, 2, 2, 1, 0$;
 \item \ldots
 \item $p-3$ with $p-2, p-3, p-3, \ldots, 2, 1, 0$;
 \item $p-2$ with $p-2, p-2, p-3, \ldots, 2, 1, 0$;

 \item etc.
 \end{itemize}
 Each number $q = \overline{0, p-2}$ becomes the descendent sequence of numbers from $p-2$ to $0$ with $q$ taken twice in a row.
\end{definition}
\begin{definition}[First-tier construction]\label{tier1}
 We have built this sequence $\{ \hat{S}_j^{(0)}\}$  of $p$ integers. To define 
$\hat{S}_j^{(1)}$, we substitute each number in $\{ \hat{S}_j^{(0)}\}$ according to the rule. As a result, we have the sequence $\{\hat{S}_j^{(1)}\}$ of $p^2$ integers.
\end{definition}
We continue the procedure ad infinitum, substituting each number in $\{ \hat{S}_j^{(l)}\}$  to define $\{ \hat{S}_j^{(l+1)}\}$ . It is easy to see that
\begin{itemize}
\item on each step the length of the sequence multiplies by $p$ and
\item for each $l \ge 0$,  $\{ \hat{S}_j^{(l)}\}$ is a prefix of $\{ \hat{S}_j^{(l+1)}\}$.
\end{itemize}

There exists a countably infinite sequence $\{ \hat{S}_j^{(\infty)}\}$ which contains each of $\{ \hat{S}_j^{(l)}\}$ as a prefix.
\begin{definition}
Let $\{\hat{S}_j\}_{j \ge 0} := \{ \hat{S}_j^{(\infty)}\}_{j \ge 0}$.
\end{definition}

We could as well define the substitution rule first and then generate the sequence from initial datum $s_0 = p-2$, i. e. the sequence is is determined in a unique way by the rule and the initial element. This notion will be useful further.

\begin{example}
Let $p=4$.

$\begin{array}{|c|cccc|cccc|cccc|cccc|}
\hline
\{ \mathbf{\hat{S}_j^{(0)}}\} & 2 & & & & 2 & & & & 1 & & & & 0 & & & \\
\hline
\{ \mathbf{\hat{S}_j^{(1)}}\} & 2 & 2 & 1 & 0 & 2 & 2 & 1 & 0 & 2 & 1 & 1 & 0 & 2 & 1 & 0 & 0 \\
\hline
\{ \mathbf{\hat{S}_j^{(2)}}\} &
\text{\small{2210}} & \text{\small{2210}} & \text{\small{2110}} & \text{\small{2100}} &
\text{\small{2210}} & \text{\small{2210}} & \text{\small{2110}} & \text{\small{2100}} &
\text{\small{2210}} & \text{\small{2110}} & \text{\small{2110}} & \text{\small{2100}} &
\text{\small{2210}} & \text{\small{2110}} & \text{\small{2100}} & \text{\small{2100}} \\
\hline
 \mathbf{\ldots} & & & & & & & & & & & & & & & & \\
\hline
\end{array} $
\end{example}

\begin{proposition}[Characteristic property of $\{\hat{S}_j\}$]\label{charProp}
For each $m > 0,\ 0 \le j \le p-1$: $\hat{S}_{pm+j}=\hat{S}_{p(p-1-S_m) + j}$
\end{proposition}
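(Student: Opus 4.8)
The plan is to reduce the identity to the single structural fact that $\{\hat{S}_j\}$ is a fixed point of the substitution rule: writing $B(q)$ for the length-$p$ word that the Number substitution rule assigns to a symbol $q$ (the descending block $p-2,p-3,\dots,0$ with $q$ repeated), I want the \emph{self-similarity relation}
\[
\hat{S}_{pm+j} = B(\hat{S}_m)_j, \qquad m \ge 0,\ 0 \le j \le p-1,
\]
where $B(\hat{S}_m)_j$ is the $j$-th letter of the block substituted for $\hat{S}_m$. This is precisely the assertion that substituting every entry of $\{\hat{S}_j\}$ reproduces $\{\hat{S}_j\}$, i.e.\ the limiting form of the relation that tier $l+1$ is obtained by substituting tier $l$.

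First I would prove this relation from the prefix-nesting already recorded in the excerpt. Fixing $m$ and $j$, choose a tier $l$ large enough that $\hat{S}^{(l)}$ already determines the entries in positions $m$ and $pm+j$; since tier $l+1$ is obtained by substituting each entry of tier $l$ in turn, its $m$-th block of length $p$ is exactly $B(\hat{S}^{(l)}_m)$, so its entry in position $pm+j$ equals $B(\hat{S}^{(l)}_m)_j$. Because each tier is a prefix of the next, passing to the limit gives $\hat{S}_{pm+j} = B(\hat{S}_m)_j$. Alongside this I would record the boundary values coming straight from the Zero-tier construction: $\hat{S}_0 = p-2$ and $\hat{S}_i = p-1-i$ for $1 \le i \le p-1$.

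Given these two ingredients the conclusion follows in one step. Applying the self-similarity relation to each of the two indices in the claim gives
\[
\hat{S}_{pm+j} = B(\hat{S}_m)_j, \qquad \hat{S}_{p(p-1-\hat{S}_m)+j} = B(\hat{S}_{p-1-\hat{S}_m})_j .
\]
Every letter of the sequence lies in $\{0,\dots,p-2\}$, so the index $p-1-\hat{S}_m$ always lies in $\{1,\dots,p-1\}$; for indices in that range the boundary formula yields $\hat{S}_{p-1-\hat{S}_m} = p-1-(p-1-\hat{S}_m) = \hat{S}_m$. Hence $B(\hat{S}_{p-1-\hat{S}_m})$ and $B(\hat{S}_m)$ are the same word, and comparing their $j$-th letters gives $\hat{S}_{pm+j} = \hat{S}_{p(p-1-\hat{S}_m)+j}$, as required.

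The only real obstacle is the first step: cleanly upgrading the inductive, tier-by-tier definition into the self-similarity relation for the \emph{infinite} limit sequence rather than for each finite tier. Once that is in hand the rest is the one-line observation that $p-1-\hat{S}_m$ invariably lands in the initial segment where $\hat{S}_i = p-1-i$, forcing the two substituted blocks to coincide. The hypothesis $m>0$ is not actually needed for the argument (the case $m=0$ works identically, since $\hat{S}_0 = \hat{S}_1 = p-2$); it merely matches the intended recursive reading of the identity.
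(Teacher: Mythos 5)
Your proposal is correct and follows essentially the same route as the paper: both arguments rest on the self-similarity relation (the entry at position $pm+j$ is the $j$-th letter of the block substituted for $\hat{S}_m$) combined with the fact that the block substituted for a symbol $q$ reappears verbatim in the sequence starting at position $p(p-1-q)$, which you obtain by a second application of self-similarity together with the zero-tier values $\hat{S}_i=p-1-i$. The paper reads the same fact off from tier 1 by direct indexing, so the content is identical.
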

\begin{proof}
First, we find the number of the first tier where the $(pm+j)$-th element appears: $l = \log_p{m}$. This $l$-th tier consists of $p$ blocks, each having $p^l$ elements, and each block represents an element from tier $l-1$. The element  $\hat{S}_{pm+j}$ belongs to the $m$-th block of tier $l$, which is the expansion of the $m$-th element in  tier $l-1$. Hence,  
$\hat{S}_{pm+j}$ is the $j$-th element in the substitution of $\hat{S}_m$.

Note that all the substitutions are listed in tier $1$, which consists of the elements from $\hat{S}_0$ to $\hat{S}_{p^2-1}$.
Rigorously, the substitution of $q$ if represented with the items from $\hat{S}_{p + p(p-2-q)}$ to $\hat{S}_{p + p(p-2-q) + p-1}$. If we desire the $j$-th element of this substitution (counting from 0), it is $\hat{S}_{p + p(p-2-q) + j}$. 

Using this result and simplifying the index, we show that the $j$-th element in the substitution of $\hat{S}_m$ is $\hat{S}_{p(p-1-S_m) + j}$. 
\end{proof}
\begin{example}
Let $p=4$. Our goal is to find $\hat{S}_{141}$.
$$ \hat{S}_{141} = \hat{S}_{4 \cdot 35 + 1} = \hat{S}_{4(3-\hat{S}_{35}) +1}$$
$$ \hat{S}_{35} = \hat{S}_{4 \cdot 8 + 3} = \hat{S}_{4(3-S_{8}) +3}$$
$$\hat{S}_8 = 2 \implies \hat{S}_{35}=\hat{S}_{4(3-2)+3}=\hat{S}_7=0$$
$$\hat{S}_{141}=\hat{S}_{4(3-\hat{S}_{35}) +1} = \hat{S}_{4(3-0)+1} = \hat{S}_{13}=1$$

The answer is 1.

\end{example}

With the use of this property, we may make a more handy definition of $\hat{S}$. Indeed, as shown in the example, for any $j \ge p^2$ we may find $\hat{S}_j$ applying the equation from \Cref{charProp} sufficient number of times. Hence it is enough to define only the tier 1 (i.e. the values of $\hat{S}_0, \ldots, \hat{S}_{p^2-1}$ to construct the whole sequence.
\begin{definition}\label{sm_good_def}
Define $\{\hat{S}_j^{(1)}\}$ as in \Cref{tier1}. Let
$$ \hat{S}_{pm+j} =
 \begin{cases}
     \hat{S}_{pm+j}^{(1)}, & m < p \\
     \hat{S}_{p(p-1-S_m) + j}, & m \ge p
 \end{cases}
$$
\end{definition}

We will use this definition further.

\subsection{Formulae for degrees}
\begin{definition}
Let $m \ge 0$. We define $D_m^p := \mathrm{deg}\ P_m^p$.
\end{definition}
\begin{proposition} \label{dm_eq}
For any $m \ge 0,\ 0 < k < p$:
\begin{align*}
  D_{pm}^p &= m \Delta_{p-2} + D_m^p \\
  D_{pm+k}^p &= m \Delta_{p-2}+(p-k)(k-1)+\Delta_{k-2}+D_m^p + \mathrm{max}\{p-k-1, D_{m+1}^p-D_m^p\}
\end{align*}
\end{proposition}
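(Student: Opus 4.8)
The plan is to read both degree formulas off the recurrence relations \eqref{P_pm} and \eqref{P_pm+k} established in the previous theorem. The first identity is immediate: multiplying $P_m^p(t)$ by the monomial $t^{m\Delta_{p-2}}$ shifts its degree by exactly $m\Delta_{p-2}$, so from \eqref{P_pm} we get $D_{pm}^p = m\Delta_{p-2} + D_m^p$ with no further work.

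For the second identity the one genuinely substantive point is to argue that the two summands on the right-hand side of \eqref{P_pm+k} cannot cancel each other's leading term. This follows from positivity of coefficients: each $P_m^p(t) = \sum_j \pi_m(j)\,t^j$ is a probability generating function, so its coefficients are nonnegative, and the prefactors $t^{m\Delta_{p-2}+\Delta_{k-1}}$, $t^{m\Delta_{p-2}+\Delta_{k-2}}$ together with the geometric-type sums $\sum_j t^{jk}$ and $\sum_j t^{j(p-k)}$ also have nonnegative coefficients. Hence the entire right-hand side has nonnegative coefficients, and its degree is exactly the maximum of the degrees of the two terms; even when the two terms share the same top degree, the leading coefficient of their sum is a sum of nonnegative numbers, at least one positive, so it does not vanish.

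Granting this, I would compute the degree of each term separately. The top power in the first term is $m\Delta_{p-2} + \Delta_{k-1} + (p-k-1)k + D_m^p$, since the sum $\sum_{j=0}^{p-k-1}t^{jk}$ contributes $t^{(p-k-1)k}$ at the top; the top power in the second term is $m\Delta_{p-2} + \Delta_{k-2} + (k-1)(p-k) + D_{m+1}^p$, with the sum $\sum_{j=0}^{k-1}t^{j(p-k)}$ contributing $t^{(k-1)(p-k)}$. Thus $D_{pm+k}^p$ equals the maximum of these two quantities.

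The final step is a routine algebraic rearrangement aimed at pulling a common bracket out of the maximum. Using $\Delta_{k-1} = \Delta_{k-2} + (k-1)$ together with the elementary identity $(p-k)(k-1) + (p-k-1) = (k-1) + (p-k-1)k$ (both sides equal $pk - k^2 - 1$), one checks that the first term's degree rewrites as $\big(m\Delta_{p-2} + (p-k)(k-1) + \Delta_{k-2} + D_m^p\big) + (p-k-1)$, while the second term's degree rewrites as $\big(m\Delta_{p-2} + (p-k)(k-1) + \Delta_{k-2} + D_m^p\big) + (D_{m+1}^p - D_m^p)$. Factoring out the common bracket turns the maximum of the two degrees into the claimed $m\Delta_{p-2} + (p-k)(k-1) + \Delta_{k-2} + D_m^p + \max\{p-k-1,\,D_{m+1}^p - D_m^p\}$. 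The only place needing care, and the closest thing to an obstacle, is the nonnegativity argument ruling out cancellation of leading terms; once that is secured, the rest is bookkeeping with triangle numbers.
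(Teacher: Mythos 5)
Your proof is correct and takes essentially the same route as the paper: read the degrees off the recurrence relations \eqref{P_pm} and \eqref{P_pm+k}, express $D_{pm+k}^p$ as a maximum of the two term degrees, and pull the common summands out of the maximum. The only difference is that you explicitly justify the non-cancellation of leading terms via nonnegativity of the coefficients, a point the paper's proof passes over with the word ``directly''; this is a welcome bit of extra rigor but not a different argument.
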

\begin{proof}
Directly from the (\ref{P_pm}) and (\ref{P_pm+k}) we obtain the equations for degrees:
\begin{align*}
  D_{pm}^p &= m \Delta_{p-2} + D_m^p \\
  D_{pm+k}^p &= \mathrm{max}\{ m\Delta_{p-2} + \Delta_{k-1} + k(p-k-1) + D_m^p,\     
                                m\Delta_{p-2} + \Delta_{k-2} + (p-k)(k-1) + D_{m+1}^p \}
\end{align*}
To show this proposition, it is enough to take the common terms out of the maximum operator.
\end{proof}
The following holds for arbitrary fixed $p \ge 3$. Keeping this in mind, we use $D_j$ instead of $D_j^p$ for short.
\begin{definition} \label{sm_def}
For any $m \ge 0$, we define $S_m := D_{m+1}^p - D_m^p $.
\end{definition}
Our goal is to show that this sequence $\{S_m\}_{m \ge 0}$ is equal to $\{\hat{S}_m\}$ defined in the previous Subsection. First, we deduce recurrence equations on $S_m$.
\begin{proposition}
\begin{equation} \label{sm_eq}
S_{pm+k} = p-k-1 - \mathbbm{1}\{S_m < p-1-k\}
\end{equation}
\end{proposition}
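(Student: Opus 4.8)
The plan is to read $S_{pm+k} = D_{pm+k+1} - D_{pm+k}$ directly off the degree equations of \Cref{dm_eq} and simplify, splitting into cases according to which of the indices $pm+k$ and $pm+k+1$ is a multiple of $p$. Now $pm+k$ is a multiple of $p$ exactly when $k=0$, and $pm+k+1$ exactly when $k=p-1$; since $p\ge 3$ keeps these two values distinct, there are three regimes to treat, namely $k=0$, the generic range $0<k<p-1$, and $k=p-1$.

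Before the case analysis I would record the a priori bound $0\le S_m\le p-2$ for every $m$. This is not circular: it follows from $0\le\phi\le p-2$ (recall that here $\omega(j)=j$), because $\phi^{(m+1)}(x)=\phi^{(m)}(x)+\phi(S^mx)$ gives $\phi^{(m)}(x)\le\phi^{(m+1)}(x)\le\phi^{(m)}(x)+(p-2)$ pointwise, and passing to the maxima that define the degrees yields $D_m\le D_{m+1}\le D_m+(p-2)$. These are precisely the inequalities needed to resolve the maxima in the two boundary cases.

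For the generic case $0<k<p-1$, both $D_{pm+k}$ and $D_{pm+k+1}$ are given by the second line of \Cref{dm_eq}, the latter obtained by the substitution $k\mapsto k+1$. Subtracting, the common terms $m\Delta_{p-2}+D_m$ cancel; the triangular and quadratic parts collapse via $\Delta_{k-1}-\Delta_{k-2}=k-1$ and $(p-k-1)k-(p-k)(k-1)=p-2k$, together contributing $p-k-1$; and the surviving contribution is $\max\{p-k-2,S_m\}-\max\{p-k-1,S_m\}$. The elementary identity $\max\{a,c\}-\max\{a+1,c\}=-\mathbbm{1}\{c\le a\}$, valid for all integers, turns this into $-\mathbbm{1}\{S_m<p-1-k\}$, which is the claimed formula. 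Note that this regime needs no bound on $S_m$ at all.

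The bounds enter only at the two boundaries. For $k=0$ I would combine $D_{pm}=m\Delta_{p-2}+D_m$ with $D_{pm+1}$ read off at $k=1$, obtaining $S_{pm}=\max\{p-2,S_m\}$; this equals $p-2=(p-1)-\mathbbm{1}\{S_m<p-1\}$ precisely because $S_m\le p-2$. For $k=p-1$ I would use $D_{pm+p-1}$ at $k=p-1$ (where $\max\{0,S_m\}=S_m$ since $S_m\ge 0$) together with $D_{p(m+1)}=(m+1)\Delta_{p-2}+D_{m+1}$; after cancellation using $\Delta_{p-2}-\Delta_{p-3}=p-2$ the dependence on $S_m$ drops out and one gets $S_{pm+p-1}=0=0-\mathbbm{1}\{S_m<0\}$, again matching because $S_m\ge 0$. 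The only thing to watch is the bookkeeping, namely the index shift $k\mapsto k+1$ inside the max-argument and the several triangular-number simplifications; no individual step is deep, and the sole conceptual input beyond algebra is the pointwise bound $0\le\phi\le p-2$.
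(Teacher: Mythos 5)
Your proof is correct and follows the same basic route as the paper's: read $S_{pm+k}=D_{pm+k+1}-D_{pm+k}$ off \Cref{dm_eq} and split into cases according to the residue of the index modulo $p$. In the generic range $0<k<p-1$ your bookkeeping is actually the more faithful one: the surviving maxima are $\max\{p-k-2,S_m\}-\max\{p-k-1,S_m\}$, exactly as the stated form of \Cref{dm_eq} dictates, and your identity $\max\{a,c\}-\max\{a+1,c\}=-\mathbbm{1}\{c\le a\}$ closes that case with no hypothesis on $S_m$, just as in the paper. The one genuine structural difference is how the bound $0\le S_m\le p-2$ enters. The paper uses $S_m\ge 0$ silently in the case $k=p-1$ (to replace $\max\{0,S_m\}$ by $S_m$) and proves the two-sided bound only afterwards, in \Cref{sm_estimate}, by an induction that itself runs on equation (\ref{sm_eq}); you instead derive $D_m\le D_{m+1}\le D_m+(p-2)$ up front from the pointwise inequality $0\le\phi\le p-2$. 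Your ordering removes any appearance of circularity and makes the two boundary cases self-contained; the only cost is the small, routine step of passing from the pointwise bound on $\phi^{(m)}$ to the bound on the degrees (i.e.\ on the maximal value attained with positive probability), which you should state once. Your additional case $k=0$ is consistent but superfluous if, as in \Cref{dm_eq}, the proposition is read with $0<k<p$.
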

\begin{proof}
We will show this using \Cref{dm_eq} and \Cref{sm_def}. If we try to evaluate $S_j$, it collapses into two cases:
\begin{itemize}
\item Let $k<p-1 $. $$S_{pm+k} = D_{pm+k+1}-D_{pm+k} =$$ $$= m\Delta_{p-2} + (p-k-1)k + \Delta_{k-1} + D_m + \max\{p-k-1, D_{m+1}-D_m\} - $$ $$ - m\Delta_{p-2}-(p-k)(k-1)-\Delta_{k-2}-D_m-\max\{p-k,D_{m+1}-D_m\}=$$ $$ =p-k-1+\max\{p-k-1, D_{m+1}-D_m\}-\max\{p-k,D_{m+1}-D_m\};$$
We substitute $D_{m+1}-D_m = S_m$ and observe that $$\max\{p-k-1, D_{m+1}-D_m\}-\max\{p-k,D_{m+1}-D_m\}= \mathbbm{1}\{S_m < 1-k\}.$$ This substitution leads to exactly the equation in the proposition.
\item $S_{pm+p-1} = D_{p(m+1)}-D_{pm+3} = \\=(m+1)\Delta_{p-2}+D_{m+1}-m\Delta_{p-2}-(p-(p-1))(p-1-1)-\Delta_{p-1-2}-D_m-\max\{0, D_{m+1}-D_m\}=\\=\Delta_{p-2}+(D_{m+1}-D_m)-(p-2)-\Delta_{p-3}-(D_{m+1}-D_m)=0$
\end{itemize}

A straightforward substitution shows that the second case is governed by the same equation.
\end{proof}

\begin{lemma}\label{sm1}
For $j = 0, 1, \ldots, p^2-1:\ S_j=\hat{S}_j^{(1)}$
\end{lemma}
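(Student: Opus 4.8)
The plan is to prove the identity blockwise. The index range $0\le j\le p^2-1$ splits into $p$ consecutive blocks $j=pm+k$ with $m,k\in\{0,\ldots,p-1\}$, and I will show that the $m$-th block of $\{S_j\}$ reproduces exactly the substitution word of the single symbol $\hat{S}_m^{(0)}$; by \Cref{tier1} this word is by definition the $m$-th block of $\{\hat{S}_j^{(1)}\}$. The only ingredients are the recurrence (\ref{sm_eq}), which I will use for the full range $0\le k\le p-1$ (the case $k=0$ being read off directly from the two displayed formulas of \Cref{dm_eq}), together with a single explicitly computed initial value.

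First I would pin down $S_0$. Since $P_0^p(t)=1$ we have $D_0=0$, and since $\mathrm{Ran}\ \phi=[0,p-2]$ for the standard $\omega(j)=j$ we get $D_1=\deg P_1^p=p-2$; hence $S_0=D_1-D_0=p-2$. Substituting $m=0$ and this value of $S_0$ into (\ref{sm_eq}) yields $S_k=p-k-1-\mathbbm{1}\{p-2<p-1-k\}=p-k-1-\mathbbm{1}\{k=0\}$, so that $(S_0,\ldots,S_{p-1})=(p-2,p-2,p-3,\ldots,1,0)$. This is precisely $\{\hat{S}_j^{(0)}\}$ (equivalently, the first block of $\{\hat{S}_j^{(1)}\}$, i.e. the substitution of $p-2$), which settles the claim on the initial block and, crucially, makes $S_m$ known for every $0\le m\le p-1$.

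The main step is to show, for each fixed $m\in\{0,\ldots,p-1\}$ with $q:=S_m\in\{0,\ldots,p-2\}$ now determined, that the values $S_{pm+k}$, $k=0,\ldots,p-1$, coincide with the substitution of $q$. By (\ref{sm_eq}) we have $S_{pm+k}=p-k-1-\mathbbm{1}\{k<p-1-q\}$, and I would compare this against the substitution word $p-2,p-3,\ldots,q+1,q,q,q-1,\ldots,1,0$ by reading off its $k$-th letter (indexed from $0$): it equals $p-2-k$ for $0\le k\le p-3-q$, equals $q$ for $k\in\{p-2-q,\,p-1-q\}$, and equals $p-1-k$ for $p-q\le k\le p-1$. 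A short case check then matches the two expressions: when $k\le p-2-q$ the indicator is $1$ and $S_{pm+k}=p-k-2$, which equals $p-2-k$ for $k\le p-3-q$ and equals $q$ at $k=p-2-q$; when $k\ge p-1-q$ the indicator is $0$ and $S_{pm+k}=p-k-1$, which equals $q$ at $k=p-1-q$ and equals $p-1-k$ for $k\ge p-q$. Since by \Cref{tier1} the $m$-th block of $\{\hat{S}_j^{(1)}\}$ is the substitution of $\hat{S}_m^{(0)}=S_m$, this gives $S_{pm+k}=\hat{S}_{pm+k}^{(1)}$ over the whole range.

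The point demanding care — the step I expect to be the main obstacle — is the treatment of the two boundary indices $k=p-2-q$ and $k=p-1-q$, i.e. the doubled letter $q$ at the centre of each substitution word. This is exactly where the indicator $\mathbbm{1}\{k<p-1-q\}$ in (\ref{sm_eq}) switches value, and one must verify that it flips precisely between these two positions, so that both copies of $q$ are produced rather than a single $q$ or an off-by-one shift. Everything on either side of the doubled symbol is just bookkeeping of the two monotone runs, but the degenerate endpoints $q=p-2$ (empty descending prefix, as occurs for the first block) and $q=0$ (empty descending suffix, as occurs for $m=p-1$) should be noted explicitly; they follow from the same computation since the formulas for the boundary letters remain valid when one of the runs is empty.
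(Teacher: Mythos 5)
Your proposal is correct and follows essentially the same route as the paper: compute $S_0=p-2$ from $P_0^p$ and $P_1^p$, obtain the first block $(p-2,p-2,p-3,\ldots,0)$ from the recurrence (\ref{sm_eq}), and then verify block by block that $S_{pm+k}=p-k-1-\mathbbm{1}\{k<p-1-S_m\}$ reproduces the substitution word of $S_m$. The only difference is that you carry out the general case check (including the doubled letter at $k=p-2-q,\,p-1-q$ and the $k=0$ boundary) explicitly and uniformly in $q$, where the paper computes the first few periods and asserts the rest "with ease" — a welcome tightening, not a different argument.
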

\begin{proof}
We compute the initial $p^2$ terms of this sequence. Recall that $P_0^p(t)=1,\ P_1^p(t) = \frac{1}{p-1} \sum\limits_{j=0}^{p-2} t^j$, hence $D_0 = 0,\ D_1 = p-2$ and $S_0 = p-2$. We will use this as the initial condition in the recurrence equation from (\ref{sm_eq}).
Observe the equation (\ref{sm_eq}). The right part consists of a periodic term $p-k-1$ with period $p$ and the step function $-\mathbbm{1}\{S_m < p-1-k\}$ which is determined by the number of period. Thus we evaluate the sequence $\{S_j\}$ period-wise.

In the first period, $\{S_{0p+k}\}$, we have already shown $S_0=p-2$. The step function is $-\mathbbm{1}\{k < 1\}$, so it affects none of the terms $S_1, \ldots, S_{p-1}$. They form the descending progression $p-2, p-3, \ldots, 1, 0$. Now we have the first $p$ terms of $\{S_j\}$ which are to be used to obtain the subsequent terms.

In the second period, the step function is $-\mathbbm{1}\{S_1 < p-1-k\}$. We know that $S_1=2$, thus the second period repeats the first. In the third period we have $-\mathbbm{1}\{S_1 < p-1-k\}=-\mathbbm{1}\{p-3 < p-1-k\} = -\mathbbm{1}\{k < 2\}$ which affects first two terms. The third period is $p-2,p-3,p-3,p-4, \ldots, 1, 0$.

Similarly, we use all of the terms in the first period to evaluate each of $S_j$ for $j < p^2$. 
It is shown with ease that they the same as in $\hat{S}_j^{(1)}$
\end{proof}

\begin{proposition} \label{sm_estimate}
$\forall j \ge 0:\ S_j \in \{0, 1, \ldots, p-2\}$
\end{proposition}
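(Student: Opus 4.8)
The plan is to prove that $S_j \in \{0, 1, \ldots, p-2\}$ for all $j \ge 0$ by induction on $j$, exploiting the recurrence equation~\eqref{sm_eq} together with the base case supplied by \Cref{sm1}. The key observation is that the recurrence $S_{pm+k} = p-k-1 - \mathbbm{1}\{S_m < p-1-k\}$ expresses each new term entirely in terms of an \emph{earlier} term $S_m$, since $m < pm+k$ whenever $m \ge 1$ (and the indices $pm+k$ with $m \ge 1$ are exactly those $\ge p$); the values $S_0, \ldots, S_{p^2-1}$, which include all indices with $m < p$, are already pinned down by \Cref{sm1} to coincide with $\hat{S}_j^{(1)}$, all of which lie in $[0, p-2]$.

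The main work is the inductive step, which reduces to a purely numerical range estimate on the right-hand side of~\eqref{sm_eq}. Writing $j = pm + k$ with $0 \le k \le p-1$, I would split into the two regimes governed by the indicator. When the indicator is $0$, we get $S_{pm+k} = p-k-1$, and as $k$ ranges over $[0, p-1]$ this takes values in $\{0, 1, \ldots, p-1\}$; when the indicator is $1$, we get $S_{pm+k} = p-k-2$, taking values in $\{-1, 0, \ldots, p-2\}$. Neither expression by itself stays inside $[0, p-2]$, so the crux is to show that the indicator switches on exactly at the endpoints that would otherwise escape the target range. Concretely, I expect the main obstacle to be verifying the two boundary cases: that the value $p-1$ (arising from $k=0$ with indicator $0$) cannot actually occur, and that the value $-1$ (arising from $k = p-1$ with indicator $1$) cannot actually occur.

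To dispatch these two dangerous cases, I would use the induction hypothesis $S_m \in [0, p-2]$ to control the indicator $\mathbbm{1}\{S_m < p-1-k\}$. For $k = 0$ the indicator is $\mathbbm{1}\{S_m < p-1\}$, which equals $1$ precisely because $S_m \le p-2 < p-1$ by the hypothesis; hence $S_{pm} = p-1 - 1 = p-2$, landing safely in range and eliminating the spurious value $p-1$. For $k = p-1$ the indicator is $\mathbbm{1}\{S_m < 0\}$, which equals $0$ because $S_m \ge 0$ by the hypothesis; hence $S_{pm+p-1} = p-(p-1)-1 = 0$, eliminating the spurious value $-1$. For the intermediate values $0 < k < p-1$, the two candidate outputs $p-k-1$ and $p-k-2$ both already lie in $[0, p-2]$ regardless of the indicator, so no further argument is needed there.

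Putting these together completes the induction: every term $S_j$ with $j \ge p$ is produced by~\eqref{sm_eq} from an earlier in-range term and is itself shown to lie in $\{0, 1, \ldots, p-2\}$, while the terms $S_0, \ldots, S_{p^2-1}$ are already known to lie in that set by \Cref{sm1}. The only genuinely delicate point is the endpoint bookkeeping described above, and it is precisely the induction hypothesis that forces the indicator to the correct value at $k=0$ and $k=p-1$; everything else is a routine substitution of the two possible indicator values into the affine expression $p-k-1-\mathbbm{1}\{\cdots\}$.
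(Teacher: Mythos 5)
Your proof is correct and follows essentially the same route as the paper: induction anchored on the explicitly computed initial terms, using the recurrence (\ref{sm_eq}) and the observation that the induction hypothesis forces the indicator to fire at exactly the endpoint $k=0$ that would otherwise produce the out-of-range value $p-1$. If anything you are slightly more thorough than the paper, which handles the upper endpoint the same way but dismisses the lower bound with ``Obviously, $S_j \ge 0$'' instead of running your symmetric $k=p-1$ argument.
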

\begin{proof}
The explicit evaluation of $S_0, S_1, \ldots, S_{p-1}$ shows 
\begin{equation}\label{subset}
\{0,1,\ldots,p-2\} \subset \mathrm{Ran\ } \{S_j\}
\end{equation}

Obviously, $S_j \ge 0$.
We prove by induction that $\forall m \ge p,\ 0 < k < p-1:\ S_{pm+k} < p-1$. The order $\log_p m =: u $ is the parameter of induction. The explicit evaluation of $S_p, S_{p+1}, \ldots, S_{p^2-1}$ shows the base of induction. Let the statement be true for an arbitrary $u$. When computing the values of $S_j$ for order $u+1$ from (\ref{sm_eq}), we refer to the values of $S_m$ of order $u$. In (\ref{sm_eq}), $p-k-1 \le p-1$ and $- \mathbbm{1}\{S_m < p-1-k\} \le 0$, thus $S_j \le p-1$. Let us show that the equality is never reached. Indeed, $p-k-1 = p-1$ if true only for $k=0$. When we substitute $k=0$ into the step function, we get $- \mathbbm{1}\{S_m < p-1\}$. For $S_m$ of order $u$, the statement in brackets is true due to the induction hypothesis. So, in order $u+1$ the statement $S_j < p-1$ holds. The inductive step is now shown.

Now we may assert $\mathrm{Ran}\ \{S_j\} \subset \{0,1,\ldots,p-2\}$. Bringing together this and (\ref{subset}), we infer the proposition.
\end{proof}
\begin{lemma} \label{sm2}
For each $m \ge p,\ 0 < k < p-1:\ S_{pm+k}=S_{p(p-1-S_m)+k}$
\end{lemma}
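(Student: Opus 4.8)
The plan is to apply the recurrence (\ref{sm_eq}) to each of the two indices appearing in the claim and then reduce everything to a short reflection identity on the first period. Since (\ref{sm_eq}) is valid for every nonnegative argument, for $0 < k < p-1$ I may write both
\[ S_{pm+k} = p-k-1 - \mathbbm{1}\{S_m < p-1-k\}, \qquad S_{p(p-1-S_m)+k} = p-k-1 - \mathbbm{1}\{S_{p-1-S_m} < p-1-k\}. \]
Comparing these two expressions, the lemma becomes equivalent to the assertion that the two indicator terms agree, and for that it suffices to prove the ``fixed-point'' identity $S_{p-1-S_m} = S_m$.

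The observation that makes this identity available is a range argument. By \Cref{sm_estimate} we have $S_m \in \{0,1,\ldots,p-2\}$, so the reflected index $p-1-S_m$ always lies in $\{1,\ldots,p-1\}$, i.e. strictly inside the first period of $\{S_j\}$. There the values were computed explicitly in the proof of \Cref{sm1}: for $1 \le i \le p-1$ one has $S_i = p-1-i$. Substituting $i = p-1-S_m$ then gives $S_{p-1-S_m} = p-1-(p-1-S_m) = S_m$, and plugging this back in makes the two indicators literally identical, which closes the argument.

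The one point that genuinely requires care — and the only place where the work actually happens — is ensuring that the reflected index cannot escape the range $\{1,\ldots,p-1\}$ on which the clean reflection $S_i = p-1-i$ holds. This is exactly what \Cref{sm_estimate} secures: it excludes $S_m = p-1$ (while $S_m \ge 0$ is immediate), hence $p-1-S_m \ge 1$; were $S_m = p-1$ permitted, the index would hit $0$, and since $S_0 = p-2 \neq p-1$ the reflection formula would fail. I do not anticipate any deeper obstacle, as all the analytic content is already carried by the recurrence (\ref{sm_eq}) and the range bound of \Cref{sm_estimate}, leaving only a direct substitution.
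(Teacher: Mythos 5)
Your proposal is correct and follows essentially the same route as the paper: apply the recurrence (\ref{sm_eq}) to both sides, reduce the claim to the fixed-point identity $S_{p-1-S_m}=S_m$, and verify that identity by combining the range bound of \Cref{sm_estimate} with the explicitly computed first period (the paper checks each value $S_m=0,\ldots,p-2$ case by case, which is exactly your reflection formula $S_i=p-1-i$ for $1\le i\le p-1$). Your remark on why $S_m=p-1$ must be excluded is a nice explicit justification of a step the paper leaves implicit.
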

\begin{proof}
First, we observe that for any $j \ge 0$, $S_j = S_{p-1 - S_j}$. Using the \Cref{sm_estimate}, it is enough to show the statement for $S_j=0,1,\ldots,p-2$. For example, let $S_j=0$. We make sure that $0 = S_{p-1}$ which we know from the direct computation. Similarly, we do this for any $S_j=1,\ldots,p-2$.

To prove the lemma, we will this equation. Let $m \ge p,\ 0 < k < p-1$. From (\ref{sm_eq}) we obtain $S_{pm+k}=p-k-1 - \mathbbm{1}\{S_m < p-1-k\}$ and $S_{p(p-1-S_m)+k}= p-k-1 - \mathbbm{1}\{S_{p-1-S_m} < p-1-k\}$. One may easily observe that the equality $S_m=p-1-S_m$ turns this into an identity.
\end{proof}
\begin{theorem} \label{subst_th}
For each $j \ge 0$: $S_j=\hat{S}_j$.
\end{theorem}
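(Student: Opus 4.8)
The plan is to prove $S_j = \hat{S}_j$ by strong induction on $j$, exploiting that the two sequences obey matching self-similar reductions. The base of the induction is already in hand: for $0 \le j \le p^2 - 1$, \Cref{sm1} gives $S_j = \hat{S}_j^{(1)} = \hat{S}_j$, the last equality because \Cref{sm_good_def} sets $\hat{S}_j = \hat{S}_j^{(1)}$ on the whole first tier. So it remains to treat $j \ge p^2$, which I write as $j = pm + k$ with $m \ge p$ and $0 \le k \le p-1$, assuming as induction hypothesis that $S_i = \hat{S}_i$ for all $i < j$.

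For the generic remainders $0 < k < p-1$ I would chain the two reduction rules through the induction hypothesis. \Cref{sm2} rewrites the $S$-side as $S_{pm+k} = S_{p(p-1-S_m)+k}$, while \Cref{charProp} rewrites the $\hat{S}$-side as $\hat{S}_{pm+k} = \hat{S}_{p(p-1-\hat{S}_m)+k}$. Since $m < j$, the induction hypothesis gives $S_m = \hat{S}_m$, so the two reduced indices coincide. Moreover, \Cref{sm_estimate} guarantees $S_m \in \{0,\ldots,p-2\}$, hence $p-1-S_m \in \{1,\ldots,p-1\}$ and the reduced index $p(p-1-\hat{S}_m)+k$ lies in $[p, p^2-1]$, i.e.\ in the first tier where \Cref{sm1} already equates the two sequences. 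Stringing these equalities together yields $S_{pm+k} = \hat{S}_{pm+k}$.

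The boundary remainders $k = 0$ and $k = p-1$ fall outside the range of \Cref{sm2}, so I would dispatch them by direct evaluation. Feeding $k = 0$ into the recurrence (\ref{sm_eq}) and invoking \Cref{sm_estimate} (so that $\mathbbm{1}\{S_m < p-1\} = 1$) gives $S_{pm} = p-2$; feeding $k = p-1$ gives $S_{pm+p-1} = 0$ since $S_m \ge 0$. On the $\hat{S}$-side the substitution rule makes every descendant block begin with $p-2$ and end with $0$, so $\hat{S}_{pm} = p-2$ and $\hat{S}_{pm+p-1} = 0$ as well; the two sides agree without even needing the induction hypothesis.

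I expect the only real subtlety to be bookkeeping the well-foundedness of the induction: one must check that both the auxiliary index $m$ and the reduced index $p(p-1-S_m)+k$ are strictly smaller than $j$, the former because $m < pm+k$, the latter because it sits in $[p,p^2-1]$ while $j \ge p^2$, so that the induction hypothesis legitimately applies to each. The remaining work is purely the alignment of the two reduction identities, which is routine once the range $S_m \in \{0,\ldots,p-2\}$ is in place.
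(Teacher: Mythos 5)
Your proof is correct and follows essentially the same route as the paper: the paper's own one-line argument likewise combines \Cref{sm1} (agreement on the first tier) with \Cref{sm2} (the matching reduction identity) to conclude that $\{S_j\}$ satisfies \Cref{sm_good_def}. Your version is in fact slightly more careful, since you explicitly dispatch the boundary remainders $k=0$ and $k=p-1$ that fall outside the stated range of \Cref{sm2}, a detail the paper's proof passes over in silence.
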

\begin{proof}
The results of \Cref{sm1} and \Cref{sm2} show that \Cref{sm_good_def} is true for $\{S_j\}_{j \ge 0}$.
\end{proof}
\begin{corollary}
$D_m = \sum\limits_{j=0}^{m-1} \hat{S}_j$
\end{corollary}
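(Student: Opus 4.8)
The plan is to recognize the claimed identity as a telescoping sum, so that essentially all of the substantive work has already been carried out in establishing \Cref{subst_th}. The key observation is that, by \Cref{sm_def}, the sequence $\{S_j\}$ is precisely the sequence of consecutive differences of the degrees: $S_j = D_{j+1} - D_j$. Summing these differences should collapse the partial sum.

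Concretely, I would first write the telescoping computation
$$
\sum_{j=0}^{m-1} S_j = \sum_{j=0}^{m-1} (D_{j+1} - D_j) = D_m - D_0 .
$$
It then remains to pin down the base case. Since $P_0^p(t) = 1$ (as noted in the proof of \Cref{sm1}), we have $D_0 = \deg P_0^p = 0$, whence $\sum_{j=0}^{m-1} S_j = D_m$. Finally I would invoke \Cref{subst_th}, which gives $S_j = \hat{S}_j$ for every $j \ge 0$; substituting this into the partial sum yields $D_m = \sum_{j=0}^{m-1} \hat{S}_j$, as desired.

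There is no real obstacle remaining at this stage: the only genuine content — the identification $S_j = \hat{S}_j$ — is exactly the conclusion of \Cref{subst_th}, and the rest is a one-line telescoping argument together with the trivial evaluation $D_0 = 0$. If one wished to be scrupulous, the sole point to verify is that the difference convention in \Cref{sm_def} ($S_m = D_{m+1} - D_m$) matches the direction of the telescoping, so that the intermediate terms cancel and the surviving endpoints are $D_m$ and $D_0$ rather than $D_m$ and $D_{-1}$ or similar; this is immediate from the indexing.
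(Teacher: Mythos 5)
Your proof is correct and is exactly the intended argument: the paper states this as an immediate corollary of \Cref{subst_th} without writing out the telescoping, and your computation $\sum_{j=0}^{m-1} S_j = D_m - D_0 = D_m$ together with the identification $S_j = \hat{S}_j$ is precisely what is implicit there. Nothing is missing.
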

\subsection{Formulae for lower degrees}
\begin{definition}
Let $m \ge 0$. The \emph{lower degree} $d_m^p$ of the polynomial $P_m^p(t)$ is the smallest degree of $t$ among the terms of $P_m^p(t)$.
\end{definition}

In other words, $d_m^p$ is such an integer that $P_m^p(t)=t^{d_m^p}Q(t)$, where $Q(t)$ is a polynomial having a constant term.

In this subsection we will show that the lower degree $d_m^p$ follows a substitution pattern similar to the degree $D_m^p$.
\begin{proposition}
For any $m \ge 0,\ 0 < k < p$:
\begin{align*}
  d_{pm}^p &= m \Delta_{p-2} + d_m^p \\
  d_{pm+k}^p &= m \Delta_{p-2}+\Delta_{k-2}+d_m^p + \mathrm{min}\{k-1, d_{m+1}^p-d_m^p\}
\end{align*}
\end{proposition}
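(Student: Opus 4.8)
The plan is to mirror the proof of \Cref{dm_eq}, replacing the maximum of top degrees by the minimum of bottom degrees. The first identity is immediate: equation (\ref{P_pm}) states $P_{pm}^p(t) = t^{m\Delta_{p-2}} P_m^p(t)$, and multiplying by the monomial $t^{m\Delta_{p-2}}$ shifts every exponent up by $m\Delta_{p-2}$, so the smallest exponent present becomes $m\Delta_{p-2} + d_m^p$; this gives $d_{pm}^p = m\Delta_{p-2} + d_m^p$.

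For the second identity I would read off the lower degree of each of the two groups of terms on the right-hand side of (\ref{P_pm+k}). In the first group $\frac{1}{p} t^{m\Delta_{p-2}+\Delta_{k-1}} \sum_{j=0}^{p-k-1} t^{jk} P_m^p(t)$, the smallest exponent of the factor $t^{jk}$ occurs at $j=0$ (since $k>0$), and the smallest exponent contributed by $P_m^p$ is $d_m^p$; hence the lowest degree in this group is $m\Delta_{p-2}+\Delta_{k-1}+d_m^p$. Likewise, in the second group the minimum is attained at $j=0$ (since $p-k>0$) together with the lowest term of $P_{m+1}^p$, giving $m\Delta_{p-2}+\Delta_{k-2}+d_{m+1}^p$.

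The key point, which is the analogue of the tacit ``no cancellation'' step in \Cref{dm_eq}, is that every $P_m^p$ is a probability generating function $\sum_j \pi_m(j) t^j$ with non-negative coefficients, while the prefactors $\frac{1}{p} t^{(\cdots)}$ are monomials with positive coefficient. Therefore the whole right-hand side has non-negative coefficients, and the coefficient of the overall lowest power of $t$ is a sum of non-negative numbers at least one of which is strictly positive; it cannot vanish. Consequently the lower degree of $P_{pm+k}^p$ is exactly the minimum of the two group minima:
\[
 d_{pm+k}^p = \min\{\, m\Delta_{p-2}+\Delta_{k-1}+d_m^p,\ \ m\Delta_{p-2}+\Delta_{k-2}+d_{m+1}^p \,\}.
\]

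Finally I would simplify this expression exactly as in \Cref{dm_eq}: using $\Delta_{k-1}-\Delta_{k-2} = k-1$, I factor the common summand $m\Delta_{p-2}+\Delta_{k-2}+d_m^p$ out of the minimum, leaving $\min\{k-1,\ d_{m+1}^p - d_m^p\}$, which is the claimed formula. I do not expect any genuine obstacle; the only point requiring care is the non-negativity/no-cancellation argument, which is what guarantees that the lower degree equals the minimum of the two candidate values rather than strictly exceeding it.
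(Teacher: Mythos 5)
Your proposal is correct and follows the same route as the paper, which simply reads the lower degrees off equations (\ref{P_pm}) and (\ref{P_pm+k}) in direct analogy with \Cref{dm_eq}; your computation of the two group minima and the simplification via $\Delta_{k-1}-\Delta_{k-2}=k-1$ is exactly what the paper leaves implicit. Making the no-cancellation step explicit (non-negativity of the coefficients of the probability generating functions) is a worthwhile addition, but it does not constitute a different approach.
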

\begin{proof}
This is shown directly from the equations (\ref{P_pm}) and (\ref{P_pm+k}).
\end{proof}
\begin{definition} \label{lower_sm_def}
For any $m \ge 0$ and a fixed $p$, we define $s_m := s_{m+1}^p - s_m^p $.
\end{definition}

Similarly to $S_m$ from the previous subsection, $s_m$ is a substitution sequence. This fact is shown in complete analogy with \Cref{subst_th}, so we omit the details of the proof and just describe the substitution rule.

\begin{theorem} \label{lower_subst_th}
The sequence $s_m$ is generated by substitutions:
\begin{itemize}
\item $s_0$ = 0
\item each $s_j$ is substituted with $0, 1, \ldots, s_j-1, s_j, s_j, s_j+1, \ldots, p-2$ (which is the increasing sequence of integers from $0$ to $p-2$ with $s_j$ taken twice).
\end{itemize}
\end{theorem}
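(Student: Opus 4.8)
The plan is to reproduce, line for line, the argument that established \Cref{subst_th} for the upper sequence, now for $s_m := d_{m+1}^p - d_m^p$. The first step is to turn the lower-degree proposition into a recurrence for $s_m$ of the same shape as (\ref{sm_eq}). For $0 < k < p-1$ both $d_{pm+k}^p$ and $d_{pm+k+1}^p$ are given by the second formula of the lower-degree proposition, so upon subtracting, the common summands $m\Delta_{p-2}+d_m^p$ cancel, the triangle numbers contribute $\Delta_{k-1}-\Delta_{k-2}=k-1$, and the two minima collapse via $\min\{k,s_m\}-\min\{k-1,s_m\}=\mathbbm{1}\{s_m\ge k\}$. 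This yields $s_{pm+k}=(k-1)+\mathbbm{1}\{s_m\ge k\}=k-\mathbbm{1}\{k>s_m\}$.

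The two boundary cases must then be folded into this single formula. For $k=0$ one subtracts $d_{pm}^p$ (first formula) from $d_{pm+1}^p$ (second formula with $k=1$) and obtains $\Delta_{-1}+\min\{0,s_m\}=0$; for $k=p-1$ one subtracts $d_{pm+p-1}^p$ (second formula, $k=p-1$) from $d_{p(m+1)}^p$ (first formula with $m\mapsto m+1$) and obtains $(\Delta_{p-2}-\Delta_{p-3})+(d_{m+1}^p-d_m^p)-\min\{p-2,s_m\}=(p-2)+s_m-s_m=p-2$, the last equality requiring $s_m\le p-2$. Both values agree with $k-\mathbbm{1}\{k>s_m\}$, so the recurrence $s_{pm+k}=k-\mathbbm{1}\{k>s_m\}$ holds uniformly for $0\le k\le p-1$. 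Before invoking it I would establish the range bound $0\le s_m\le p-2$ for every $m$, exactly as in \Cref{sm_estimate}: the base case is $s_0=d_1^p-d_0^p=0$ (since $P_0^p=1$ and $P_1^p=\tfrac{1}{p-1}\sum_{j=0}^{p-2}t^j$ both have lowest degree $0$), and induction on $n=pm+k$ (where $m<n$) gives $0\le s_{pm+k}=k-\mathbbm{1}\{k>s_m\}$; for $k\le p-2$ this is at most $k\le p-2$ outright, while for $k=p-1$ it equals $p-2$ precisely because the hypothesis $s_m\le p-2$ forces the indicator to be $1$.

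With the range bound in hand, the recurrence admits a direct reading: the length-$p$ block $(s_{pm},\dots,s_{pm+p-1})$ equals $0,1,\dots,s_m-1,s_m,s_m,s_m+1,\dots,p-2$, i.e. the increasing run from $0$ to $p-2$ with the symbol $s_m$ repeated — precisely the substitution named in the theorem, call it $\tau(s_m)$. Since $s_0=0$ and $\tau(0)$ begins with $0$, the substitution is prolongable, and its fixed point $\hat s$ satisfies the same block identity, block $m$ of $\hat s$ being $\tau(\hat s_m)$. A strong induction on the index shows that $s_0=0$ together with ``block $m$ is $\tau(s_m)$'' determines the whole sequence uniquely (block $m$ is governed by $s_m$, and $pm\ge m+1$ for $m\ge 1$), so $s_m=\hat s_m$. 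If one prefers to mirror \Cref{sm2} and \Cref{subst_th} literally, the same recurrence gives the characteristic identity $s_{pm+j}=s_{p(s_m+1)+j}$ through $s_{s_m+1}=s_m$, and matching the first two tiers finishes the identification.

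The main obstacle is purely the block-boundary bookkeeping: making the $k=0$ and $k=p-1$ endpoints coincide with the interior formula, and — as in every lowest-degree computation in this section — confirming that reading off $d_{pm+k}^p$ as the minimum of the two lower degrees in (\ref{P_pm+k}) is legitimate. The latter is guaranteed because each $P_m^p$ is a probability generating function with nonnegative coefficients, so the two summands of (\ref{P_pm+k}) cannot cancel their lowest-order terms; once this is noted, everything else is routine and parallels the upper-sequence case verbatim.
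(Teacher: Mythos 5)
Your proposal is correct and carries out precisely the ``complete analogy with \Cref{subst_th}'' that the paper invokes while omitting the details: the recurrence $s_{pm+k}=k-\mathbbm{1}\{k>s_m\}$, the range bound $0\le s_m\le p-2$, and the reading of each length-$p$ block as $0,1,\ldots,s_m,s_m,\ldots,p-2$ all check out, and they match the paper's stated substitution rule. The only addition beyond the paper's (implicit) argument is your closing remark that nonnegativity of the coefficients prevents cancellation of lowest-order terms, which is a worthwhile point the paper leaves unstated for both the upper- and lower-degree computations.
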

\begin{example}
We evaluate the first elements of $\{s_j\}_{j \ge 0}$ for $p=4$ with the substitution rule:
$0 \mapsto 0012 \mapsto 0012\ 0012\ 0112\ 0122 \mapsto [0012\ 0012\ 0112\ 0122]\ [0012\ 0012\ 0112\ 0122]\  [0012\ 0112\ 0112\ 0122]\ [0012\ 0112\ 0122\ 0122] \mapsto \ldots$
Writing the last expression in a row, we get the first $64$ elements of $\{s_j\}$.
\end{example}
\subsection{Symmetric forms of polynomials}
Here we use the results of \Cref{subst_th} and \Cref{lower_subst_th} to evaluate a reduced form of the recurrence equations (\ref{P_pm}) and (\ref{P_pm+k}).
\begin{lemma}
For each $j \ge 0$, $S_j + s_j = p-2 = const$
\end{lemma}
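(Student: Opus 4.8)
The plan is to exploit the fact that both $\{S_j\}$ and $\{s_j\}$ are substitution sequences (by \Cref{subst_th} and \Cref{lower_subst_th}) and to prove the identity by induction on the substitution tier. The guiding idea is that the descending substitution rule for $S$ and the ascending substitution rule for $s$ are mirror images of one another, so that complementary seeds produce, block by block, sequences that remain complementary position by position.

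The central step is a combinatorial lemma about a single substitution. Writing $R_S(q)$ for the length-$p$ block that the $S$-rule assigns to a value $q \in [0,p-2]$ and $R_s(q')$ for the block assigned by the $s$-rule, I claim that for every such $q$ and every position $i \in [0,p-1]$,
$$\bigl(R_S(q)\bigr)_i + \bigl(R_s(p-2-q)\bigr)_i = p-2.$$
This is verified by a direct index computation: $R_S(q)$ is the descending list $p-2,p-3,\ldots,0$ with $q$ doubled, while $R_s(p-2-q)$ is the ascending list $0,1,\ldots,p-2$ with $p-2-q$ doubled; in both blocks the doubled pair occupies exactly the two positions $i=p-2-q$ and $i=p-1-q$, and splitting $[0,p-1]$ into the three ranges before, at, and after the doubled pair, one checks in each range that the two entries sum to $p-2$.

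I would then run the induction using the ``single initial element'' description of the generation process noted after \Cref{tier1}: $\{S_j\}$ is generated from the seed $p-2$ and $\{s_j\}$ from the seed $0$, and these seeds are complementary, since $(p-2)+0=p-2$. Assuming the $l$-th tiers of the two sequences, each of length $p^{l}$, are complementary position by position, the $(l+1)$-st tiers are obtained by substituting every entry; complementary entries $q$ and $p-2-q$ produce complementary blocks by the lemma, and since concatenation preserves a position-wise relation, the $(l+1)$-st tiers are again complementary. Because each $l$-th tier is a prefix of the next and every index $j$ lies in all sufficiently high tiers, the identity $S_j+s_j=p-2$ passes to the limiting sequences for all $j\ge 0$.

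I expect the only genuine obstacle to be the bookkeeping in the lemma — correctly locating the doubled entry in each block and matching up the three index ranges — after which the tier induction is immediate. An alternative route avoiding substitutions would derive a recurrence for $s_{pm+k}$ from the lower-degree proposition, mirroring \eqref{sm_eq}, and then verify $S_{pm+k}+s_{pm+k}=p-2$ directly by induction on the tier using $S_m+s_m=p-2$; I note this as a cross-check but prefer the substitution argument, as it is shorter and matches the framing of this subsection.
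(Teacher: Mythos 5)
Your proof is correct and follows the same route as the paper: induction on the substitution tier, starting from the complementary seeds $S_0=p-2$, $s_0=0$, and showing that the descending rule for $S$ and the ascending rule for $s$ send complementary entries to position-wise complementary blocks. The paper merely asserts that the block-level step "follows directly from the substitution rules," whereas you verify it explicitly (the doubled entries of $R_S(q)$ and $R_s(p-2-q)$ both sit at positions $p-2-q$ and $p-1-q$, and the three index ranges each sum to $p-2$), which is a welcome filling-in of the omitted detail rather than a different argument.
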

\begin{proof}
We prove this by induction. As a base, we recall that $S_j$ is generated from $S_0 = p-2$ and $s_j$ is generated from $s_0 = 0$. For index $0$, the statement holds: $S_0 + s_0 = p-2$.

Now we prove the induction step. Let $j$ be such that $S_j + s_j = p-2$. We state that when we substitute $S_j$ and $S_j$ with the proper sequences of length $p$, the statement holds for each pair of elements in these subsequences. This follows directly from the substitution rules.
\end{proof}

\begin{definition}
The \emph{mid-degree} $\delta_m^p$ of a polynomial $P_m^p(t)$ is the arithmetic mean of its degree and its lower degree: $\delta_m^p = \frac{D_m^p+d_m^p}{2}$
\end{definition}
\begin{corollary}
$\delta_m^p = \frac{(p-2)m}{2}$
\end{corollary}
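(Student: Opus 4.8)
The plan is to write both the degree $D_m^p$ and the lower degree $d_m^p$ as telescoping sums of their associated difference sequences, add the two expressions, and apply the immediately preceding lemma $S_j + s_j = p-2$ term by term.

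First I would record the base case. Since $P_0^p(t) = 1$, both its degree and its lower degree vanish, so $D_0^p = d_0^p = 0$. By \Cref{sm_def} the sequence $S_j$ is the forward difference $D_{j+1}^p - D_j^p$, and by \Cref{lower_sm_def} the sequence $s_j$ is the forward difference $d_{j+1}^p - d_j^p$. Summing each difference sequence telescopes, and because the boundary terms $D_0^p$ and $d_0^p$ are zero there is no leftover contribution:
$$D_m^p = \sum_{j=0}^{m-1} S_j, \qquad d_m^p = \sum_{j=0}^{m-1} s_j.$$
The first of these is precisely the Corollary following \Cref{subst_th}, using the identity $S_j = \hat{S}_j$; the second is its exact analogue for lower degrees, obtained by the same telescoping argument applied to $\{s_j\}$.

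Next I would add the two identities and invoke the lemma asserting that $S_j + s_j = p-2$ for every $j \ge 0$:
$$D_m^p + d_m^p = \sum_{j=0}^{m-1}\left(S_j + s_j\right) = \sum_{j=0}^{m-1}(p-2) = m(p-2).$$
Dividing by $2$ then yields $\delta_m^p = \frac{D_m^p + d_m^p}{2} = \frac{(p-2)m}{2}$, which is the claim.

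There is no genuine obstacle here: once the constant-sum lemma is in hand, the argument is a one-line telescoping computation. The only point demanding a moment of care is confirming that the telescoping sums carry no boundary term, i.e. that $D_0^p = d_0^p = 0$, and this is immediate from $P_0^p(t) = 1$.
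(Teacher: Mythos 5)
Your proof is correct and follows essentially the same route as the paper: telescoping $D_m^p$ and $d_m^p$ into sums of $S_j$ and $s_j$, then applying the lemma $S_j + s_j = p-2$ termwise. Your explicit check that $D_0^p = d_0^p = 0$ is a small added care the paper leaves implicit.
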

\begin{proof}
For a fixed $p$, $\delta_m^p =  \frac{D_m^p+d_m^p}{2} =  \frac{\sum\limits_{j=0}^{m-1}S_j + \sum\limits_{j=0}^{m-1}s_j}{2} =  \frac{\sum\limits_{j=0}^{m-1}(S_j + s_j)}{2} = m \frac{p-2}{2}$
\end{proof}
\begin{definition}
We define the \emph{symmetric form} of polynomials $P_m^p(t)$:
$$ \SymPol{p}{m}(t) = t^{-\delta_m^p}P_m^p(t)$$
\end{definition}

These are polynomial-like functions of $t$. The terms in the symmetric form are monomials of $t$ in real degrees. If the number of terms in $P_m^p$ is odd, then the degrees of $t$ in $\SymPol{m}{p}(t)$ are integers, otherwise they are half-integers.

The degrees of $t$ in $\SymPol{m}{p}(t)$ take from $-\frac{D_m^p}{2} $ to $\frac{D_m^p}{2} $, half positive and half negative (and a term of degree 0, i.e. constant, if the number of terms is odd). 

\begin{theorem}(Recurrence equations on the symmetric forms of $P_m^p$)
\begin{align}
    \SymPol{p}{pm}(t)&=\SymPol{p}{m}(t)\\
    \SymPol{p}{pm+k}(t)&=\frac{1}{p}\left(\sum\limits_{j=-\frac{p-k-1}{2}}^{\frac{p-k-1}{2}} t^{jk}\right)\SymPol{p}{m}(t)+\frac{1}{p}\left(\sum\limits_{j=-\frac{k-1}{2}}^\frac{k-1}{2} t^{j(p-k)}\right)\SymPol{p}{m+1}(t)
\end{align}
\end{theorem}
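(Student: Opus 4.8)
The plan is to derive both identities directly from the recurrence equations (\ref{P_pm}) and (\ref{P_pm+k}) by substituting the definition $\SymPol{p}{m}(t) = t^{-\delta_m^p}P_m^p(t)$ and invoking the closed form $\delta_m^p = \frac{(p-2)m}{2}$ from the preceding corollary. The whole argument reduces to tracking the monomial prefactors $t^{(\,\cdot\,)}$ and checking that, in each term, they are exactly what is needed to recenter a one-sided geometric sum $\sum_{j=0}^{N-1}t^{jc}$ into its symmetric counterpart $\sum_{j=-(N-1)/2}^{(N-1)/2}t^{jc}$. No new combinatorics are required; everything is already contained in the two recurrences and the affine formula for the mid-degree.

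For the first (degenerate) equation I would start from $P_{pm}^p(t) = t^{m\Delta_{p-2}}P_m^p(t)$ (\Cref{simpleCase}), multiply by $t^{-\delta_{pm}^p}$, and write $P_m^p(t) = t^{\delta_m^p}\SymPol{p}{m}(t)$. The exponent collected in front of $\SymPol{p}{m}(t)$ is $-\delta_{pm}^p + m\Delta_{p-2} + \delta_m^p$; using $\Delta_{p-2} = \tfrac{(p-1)(p-2)}{2}$ and $\delta_j^p = \tfrac{(p-2)j}{2}$, this collapses to $0$, giving $\SymPol{p}{pm}(t) = \SymPol{p}{m}(t)$ at once.

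For the second equation the computation is identical in spirit but carried out termwise. I substitute $P_m^p = t^{\delta_m^p}\SymPol{p}{m}$ and $P_{m+1}^p = t^{\delta_{m+1}^p}\SymPol{p}{m+1}$ into (\ref{P_pm+k}) and factor out $t^{-\delta_{pm+k}^p}$. For the $\SymPol{p}{m}$-term the accumulated constant exponent is $-\delta_{pm+k}^p + m\Delta_{p-2} + \Delta_{k-1} + \delta_m^p$, which simplifies to $-\tfrac{k(p-k-1)}{2}$; this is precisely the shift produced by rewriting $\sum_{j=0}^{p-k-1}t^{jk} = t^{k(p-k-1)/2}\big(\sum_{j=-(p-k-1)/2}^{(p-k-1)/2}t^{jk}\big)$, so the two exponents cancel and the term becomes $\tfrac{1}{p}\big(\sum_{j=-(p-k-1)/2}^{(p-k-1)/2}t^{jk}\big)\SymPol{p}{m}(t)$. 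The $\SymPol{p}{m+1}$-term is handled the same way: its constant exponent $-\delta_{pm+k}^p + m\Delta_{p-2} + \Delta_{k-2} + \delta_{m+1}^p$ reduces to $-\tfrac{(k-1)(p-k)}{2}$, which cancels the shift from $\sum_{j=0}^{k-1}t^{j(p-k)} = t^{(k-1)(p-k)/2}\big(\sum_{j=-(k-1)/2}^{(k-1)/2}t^{j(p-k)}\big)$. Adding the two recentered terms yields the claimed formula.

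The computation is routine, and the only place demanding care is the bookkeeping of exponents; the substantive point is \emph{why} the cancellation occurs. It works precisely because $\delta_m^p$ is exactly affine in $m$ with slope $\tfrac{p-2}{2}$, itself a consequence of the palindromic property: the mid-degree is the true center of symmetry of $P_m^p$, so dividing by $t^{\delta_m^p}$ centers every polynomial at $0$, and the geometric sums appearing in (\ref{P_pm+k}), being palindromic about their own midpoints, automatically align with that common center. Thus the main (and mild) obstacle is purely symbolic, namely confirming the two exponent identities $-\delta_{pm+k}^p + m\Delta_{p-2} + \Delta_{k-1} + \delta_m^p = -\tfrac{k(p-k-1)}{2}$ and $-\delta_{pm+k}^p + m\Delta_{p-2} + \Delta_{k-2} + \delta_{m+1}^p = -\tfrac{(k-1)(p-k)}{2}$, after which everything collapses into the stated symmetric recurrences.
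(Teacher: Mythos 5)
Your proposal is correct and follows exactly the route of the paper's own (one-line) proof: substitute $P_m^p(t)=t^{\delta_m^p}\SymPol{p}{m}(t)$ into the recurrences (\ref{P_pm}) and (\ref{P_pm+k}) and use $\delta_m^p=\frac{(p-2)m}{2}$ to recenter the geometric sums. Your two exponent identities check out, so you have simply supplied the bookkeeping the paper leaves implicit.
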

In these sums, indexes may not be integers, which is not absolutely strict though compact. It is implied that the sum index $j$ increases by one, anyway. For example, given $p=5$ and $k=1$, the expression $\left(\sum\limits_{j=-\frac{p-k-1}{2}}^{\frac{p-k-1}{2}} t^{jk}\right)$ means $t^{-\frac{3}{2}} + t^{-\frac{1}{2}} + t^{\frac{1}{2}} +  t^{\frac{3}{2}}$.
\begin{proof} Substituting $P_m^p(t) = t^{\delta_m^p}\SymPol{m}{p}(t)$, we deduce these equations directly from (\ref{P_pm}) and (\ref{P_pm+k}).
\end{proof}
\newpage
\bibliography{chacon}

\end{document}